\newcommand{\wn}{\widetilde{\nabla}}
\theoremstyle{plain}
\newtheorem{theorem}{Theorem}[section]
\newtheorem*{theorem*}{Theorem}
\newtheorem{definition}[theorem]{Definition}
\newtheorem{lemma}[theorem]{Lemma}
\newtheorem{prop}[theorem]{Proposition}
\newtheorem{cor}[theorem]{Corollary}
\newtheorem{rem}[theorem]{Remark}
\newtheorem{ex}[theorem]{Example}
\DeclareMathOperator{\di}{d}
\begin{document}
\title{Gray identities, canonical connection and integrability}
\author{Antonio J. Di Scala and Luigi Vezzoni}
\date{\today}
\address{Dipartimento di Matematica,
Politecnico di Torino, Corso Duca degli Abruzzi 24, 10129 Torino,
Italy} \email{antonio.discala@polito.it}
\address{Dipartimento di Matematica \\ Universit\`a di Torino\\
Via Carlo Alberto 10 \\
10123 Torino\\ Italy} \email{luigi.vezzoni@unito.it}
\subjclass[2000]{Primary 53B20 ; Secondary 53C25}
\thanks{This work was supported by the Project M.I.U.R. ``Riemann Metrics and  Differenziable Manifolds''
and by G.N.S.A.G.A. of I.N.d.A.M.}
\begin{abstract}
We characterize quasi K\"ahler manifolds whose curvature
tensor associated to the canonical Hermitian connection satisfies the first Bianchi identity.
This condition is related with the third Gray identity and in the almost K\"ahler case implies
the integrability. Our main tool is the existence of generalized holomorphic
frames introduced by the second author previously. By using such
frames we also give a simpler and shorter proof of a Theorem of
Goldberg. Furthermore we study almost Hermitian structures
having the curvature tensor associated to the canonical Hermitian connection equal to zero.
We show some explicit examples of quasi K\"ahler structures on the Iwasawa manifold
having  the Hermitian curvature vanishing
and the Riemann curvature tensor satisfying the second Gray identity.
\end{abstract}
\maketitle
\newcommand\C{{\mathbb C}}
\newcommand\f{\mathcal{F}}
\newcommand\g{{\frak{g}}}
\renewcommand\k{{\kappa}}
\renewcommand\l{{\lambda}}
\newcommand\m{{\mu}}
\renewcommand\O{{\Omega}}
\renewcommand\t{{\theta}}
\newcommand\ebar{{\bar{\varepsilon}}}
\newcommand\R{{\mathbb R}}
\newcommand\Z{{\mathbb Z}}
\newcommand\T{{\mathbb T}}
\newcommand{\de}[2]{\frac{\partial #1}{\partial #2}}
\newcommand\w{\wedge}
\newcommand{\ov}[1]{\overline{ #1}}
\newcommand{\Tk}{\mathcal{T}_{\omega}}
\newcommand{\ovp}{\overline{\partial}}
\section{Introduction}

\noindent Quasi K\"ahler and almost K\"ahler manifolds are special classes of almost Hermitian manifolds
and can be considered as natural generalizations
of K\"ahler manifolds to the context of almost symplectic and symplectic manifolds.
It is well known that if $(M,\omega)$ is a (almost) symplectic manifold, then
there always exists an almost complex structure $J$ compatible with $\omega$.
Furthermore the choice of such an almost complex
structure is unique up to homotopy. Hence quasi K\"ahler and almost K\"ahler
structures can be consider as a tool to study (almost) symplectic
manifolds.

The interplay between the integrability of almost Hermitian structures and
the curvature has been largely studied in
the last years (see e.g. \cite{AD}, \cite{Kiirchberg} and the references therein). One
of the most important results in this topics is due to Goldberg.
Indeed Goldberg in \cite{Goldberg} proved that if the Riemann
curvature tensor of an almost K\"ahler metric $g$ satisfies the
first Gray condition, i.e. if it commutes with the almost complex structure, then $g$  is a K\"ahler metric.
Gray's conditions were introduced in \cite{Gray} and consist of some
formulae involving the curvature tensor of an almost Hermitian
metric and the associated almost complex structure.
The Goldberg theorem has been further generalized to the following formula:
\begin{equation}\label{s-s*}
s_*-s=\|\nabla\omega\|^2\,,
\end{equation}
where $s$ and $s_*$ are the scalar curvature and the $*$-scalar curvature associated to an almost
K\"ahler structure $(g,J,\omega)$ respectively
(see e.g. \cite{AD}). The classical proof of this result is based on the Weitzenb\"ock decomposition.

The research of this paper moves from \cite{donaldson,weinkove,yau}. In \cite{donaldson} Donaldson stated the following

\bigskip

\noindent \textbf{Conjecture.} \emph{Let $(M,J)$ be a compact almost complex manifold and let $\Omega$ be a taming symplectic form. Let $\sigma$ be
a smooth volume form on $M$ with $\int_{M}\sigma=\int_M \Omega^2$. Then if $\omega$ is an almost K\"ahler form with $[\omega]=[\Omega]$
and solving the Calabi-Yau equation
$$
\omega^2=\sigma\,,
$$
there are $C^\infty$ a priori bounds on $\omega$ depending only on $\Omega$, $J$ and $\sigma$.}

\bigskip
Donaldson's conjecture can be viewed as the symplectic counterpart of the Calabi conjecture proved by Yau in \cite{Calabi}.
In \cite{weinkove} Weinkove showed that a solution of the Calabi-Yau equation exists if the
Nijenhuis tensor is small in a certain sense.
In \cite{yau} Tosatti, Weinkove and Yau  studied the analogue of the Donaldson's conjecture in any dimension.
The main tool of their work is to use the canonical connection associated to an almost Hermitian structure
instead of the Levi-Civita connection (for the definition of the \emph{canonical connection} see section 2).
They proved the following

\medskip
\noindent \textbf{Theorem.} (\cite{yau}, Theorem $2$).\emph{ Let
$(M,\omega)$ be a symplectic manifold and let $J$ be an almost complex structure tamed by $\omega$.
Denote by $g$ be the almost Hermitian metric induced by $(\omega,J)$, by
$\widetilde{R}$ the curvature tensor associated
to the canonical connection of $(g,J)$, by $N$ the Nijenhuis tensor of
$J$ and by $\mathcal{R}(g,J)$ the tensor
\begin{equation}\label{tosatti}
\mathcal{R}_{i\overline{j}k\overline{l}}(g,J):=\widetilde{R}^j_{ik\overline{l}}+4N^{r}_{\ov{l}\ov{j}}\ov{N^i_{\ov{r}\ov{k}}}\,.
\end{equation}
If $\mathcal{R}(g,J)\geq 0$, then the Donaldson's conjecture holds.}

\bigskip

\noindent The last theorem imposes to study the curvature tensor associated to the canonical
connection of an almost Hermitian structure. In  this paper we will refer to this curvature tensor
as to the \emph{Hermitian curvature tensor.}

In \cite{formality} de Bartolomeis and Tomassini proved that a quasi
K\"ahler manifold always admits a special complex frame. This
result has been improved by the second author in \cite{LV1} introducing
generalized normal holomorphic frames. Such frames have been
further taken into account in \cite{LV2} to prove that if the
holomorphic bisectional curvature associated to an almost K\"ahler
metric $g$ and the holomorphic bisectional curvature associated to
the relative canonical connection coincides, then $g$ is a K\"ahler
metric. This result is not trivial, since the Hermitian curvature tensor
does not necessary satisfy the first Bianchi identity.

As first result of this paper we give a new proof of formula \eqref{s-s*}. Our proof is elementary and does not make use of
the Weitzenb\"ock decomposition, but of the existence of generalized normal holomorphic frames only.
Sections \ref{secR1}, \ref{secR2} are dedicated to the study of the Hermitian curvature tensor in quasi K\"ahler and almost
K\"ahler manifolds.
We show that in the quasi K\"ahler case this curvature tensor
satisfies the first Bianchi identity if and only
if the curvature of $g$ satisfies both the third Gray
condition and another special identity involving the derivative of
the Nijenhuis tensor. Namely,
\begin{theorem}\label{main} Let $(M,g,J,\omega)$ be a quasi K\"ahler manifold.
The Hermitian curvature tensor $\widetilde{R}$ satisfies the first
Bianchi identity
\begin{equation}
\label{bianchi}
\underset{X,Y,Z}{\mathfrak{S}}\widetilde{R}(X,Y,Z,\cdot)=0\,,\quad \mbox{for every }X,Y,Z\in\Gamma(TM)
\end{equation}
if and only if the following conditions hold:
\begin{enumerate}
\item[$1.$] the curvature tensor $R$ associated to $g$ satisfies the third Gray identity
$$
R(\overline{Z}_1,Z_2,Z_3,Z_4)=0\,,\quad\mbox{ for every } Z_1,Z_2,Z_2,Z_3\in\Gamma(T^{1,0}M)\,;
$$
\item[$2.$] we have
$$
R(Z_1,Z_2,\overline{Z}_3,\overline{Z}_4)=\frac14 F(\ov{Z}_3,Z_1,Z_2,\ov{Z}_4)$$
for
every $Z_1,Z_2,Z_3,Z_4\in\Gamma(T^{1,0}M)$,
where $F$ is the tensor
$$
F(X,Y,Z,W):=g((\nabla_{X}N)(Y,Z),W)\,,
$$
$\nabla$ is the Levi-Civita connection of $g$ and $N$ denotes the Nijenhuis tensor.
\end{enumerate}
\end{theorem}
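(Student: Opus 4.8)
The plan is to compare the canonical connection $\wn$ with the Levi-Civita connection $\nabla$ through the difference tensor $A:=\wn-\nabla$, and to decompose the Bianchi tensor $\mathfrak{S}\,\widetilde R$ according to the types of its entries. First I would record that $A_XY=-\tfrac12 J(\nabla_XJ)Y$ and that the quasi K\"ahler condition $(\nabla_XJ)Y+(\nabla_{JX}J)(JY)=0$ forces a rigid type behaviour: for $X\in\Gamma(T^{1,0}M)$ the endomorphism $(\nabla_XJ)$ annihilates $T^{0,1}M$, so that $A$ sends $T^{1,0}M\otimes T^{1,0}M$ into $T^{0,1}M$, sends $T^{0,1}M\otimes T^{0,1}M$ into $T^{1,0}M$, and vanishes on the mixed types $T^{1,0}M\otimes T^{0,1}M$ and $T^{0,1}M\otimes T^{1,0}M$. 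Moreover the restriction of $A$ to $T^{1,0}M\otimes T^{1,0}M$ is, up to a universal constant, the Nijenhuis tensor $N$; this is the source of the factor $1/4$ in the statement. I would also record the elementary consequences $(\nabla_XY)^{0,1}=-A_XY$ and $(\nabla_X\ov W)^{1,0}=0$ for $X,Y,W\in\Gamma(T^{1,0}M)$, which put the quasi K\"ahler hypothesis into a form ready for computation.

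Next I would use the standard curvature comparison
\[
\widetilde R(X,Y)Z-R(X,Y)Z=(\nabla_XA)(Y,Z)-(\nabla_YA)(X,Z)+A_XA_YZ-A_YA_XZ,
\]
together with the fact that $\wn$ preserves both $g$ and $J$. The latter gives $g(\widetilde R(X,Y)Z,W)=0$ unless $Z,W$ are of opposite type, so the only surviving entries of $\widetilde R$ are those whose last pair is mixed. Feeding vectors of pure type into \eqref{bianchi} and using this vanishing, the Bianchi tensor collapses to two independent relations (the remaining ones being complex conjugates): the relation with one antiholomorphic entry $\widetilde R(Z_1,Z_2,\ov Z_3,Z_4)=0$ and the mixed relation $\widetilde R(Z_2,\ov Z_3,Z_1,\ov Z_4)+\widetilde R(\ov Z_3,Z_1,Z_2,\ov Z_4)=0$, for all $Z_i\in\Gamma(T^{1,0}M)$. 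Thus the first Bianchi identity for $\widetilde R$ is equivalent to these two families of equations.

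I would then treat the first relation. Here the quadratic terms $A_XA_YZ$ die because $A$ kills the mixed types, and the surviving pieces of $(\nabla A)$, after substituting $(\nabla Z)^{0,1}=-A(\cdot)$ and $(\nabla\ov Z)^{1,0}=0$, reduce to expressions of the form $g(A_{\,\cdot}\ov Z_3,Z_4)$ with $A_{\,\cdot}\ov Z_3\in\Gamma(T^{1,0}M)$. Since $g$ vanishes on $T^{1,0}M\otimes T^{1,0}M$, all these corrections vanish and one obtains $\widetilde R(Z_1,Z_2,\ov Z_3,Z_4)=R(Z_1,Z_2,\ov Z_3,Z_4)$. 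By the symmetries of the Riemann tensor this component agrees, up to sign, with $R(\ov Z_1,Z_2,Z_3,Z_4)$, so the first relation is exactly the third Gray identity, namely condition $1$.

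Finally, the mixed relation produces condition $2$, and this is the step I expect to be the main obstacle. Writing each $\widetilde R$ as $R+\text{correction}$, the two Riemann parts combine through the first Bianchi identity of the torsion-free connection $\nabla$ into the single term $-R(Z_1,Z_2,\ov Z_3,\ov Z_4)$. The delicate point is the correction: it contains genuine quadratic terms $A_{\ov Z_3}A_{Z_2}Z_1$ arising both from $A_XA_Y$ and from the type-breaking parts of $\nabla$ inside $(\nabla_{Z_2}A)(\ov Z_3,Z_1)$, and I would have to check that these cancel exactly, leaving only the derivative term $(\nabla_{\ov Z_3}A)(Z_2,Z_1)$. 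Identifying the latter with $(\nabla_{\ov Z_3}N)(Z_2,Z_1)$ up to the universal constant and pairing with $\ov Z_4$ turns the mixed relation into $R(Z_1,Z_2,\ov Z_3,\ov Z_4)=\tfrac14 F(\ov Z_3,Z_1,Z_2,\ov Z_4)$, which is condition $2$. Throughout, the computations of $\nabla A$ and of the Nijenhuis derivative are most transparent in a generalized normal holomorphic frame, where $\wn$ has vanishing coefficients at the centre; I would carry out the constant-tracking there, including the factor $1/4$ and the ordering of the last two arguments of $F$, which relies on the antisymmetry of $N$.
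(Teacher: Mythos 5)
Your type decomposition of \eqref{bianchi} is correct and parallels the paper's: the only nontrivial instances of the cyclic sum are the one forcing $\widetilde R(Z_1,Z_2,\ov{Z}_3,Z_4)=0$, which via $\widetilde R_{ijk\ov{l}}=R_{ijk\ov{l}}$ (Proposition \ref{RTILDE1}) and the Riemann symmetries is exactly the third Gray identity, and the mixed one, whose Riemann parts recombine through the first Bianchi identity of $\nabla$ into $\pm R(Z_1,Z_2,\ov{Z}_3,\ov{Z}_4)$. The gap is exactly where you flag it, and the plan you sketch for closing it contains an error. Your preliminary claim that $A=\wn-\nabla$ restricted to $T^{1,0}M\otimes T^{1,0}M$ is a universal multiple of $N$ is false for a general quasi K\"ahler structure: the fundamental relation \eqref{Fondamentale} gives $g(\nabla_{Z_1}Z_2,Z_3)=\tfrac14 g(N(Z_2,Z_3),Z_1)$ plus a term proportional to $\di\omega(Z_1,Z_2,Z_3)$, and the $(3,0)$-part of $\di\omega$ need not vanish; only the skew part of $A$, namely the $(0,2)$-torsion $A_{Z_1}Z_2-A_{Z_2}Z_1=-[Z_1,Z_2]^{0,1}=\tfrac14 N(Z_1,Z_2)$, is universally $\tfrac14 N$. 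Your mixed relation is skew in $Z_1,Z_2$, so in fact only this torsion part survives in the cyclic sum, but the identification ``$(\nabla_{\ov{Z}_3}A)(Z_2,Z_1)=\tfrac14(\nabla_{\ov{Z}_3}N)(Z_2,Z_1)$'' as you state it would import a spurious $\nabla\di\omega$ contribution; the cancellation you defer is not automatic from your stated premises.

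Second, even after restricting to the torsion, the equality between the surviving correction and $\tfrac14 F(\ov{Z}_3,Z_1,Z_2,\ov{Z}_4)$ is the real content of the step and you have not supplied it. In a g.n.h.f.\ at $o$ the canonical connection has vanishing coefficients, so the correction to the mixed relation is the purely quadratic expression $-g([Z_1,Z_2],\nabla_{\ov{Z}_3}\ov{Z}_4)(o)$ (no derivative of $A$ survives as such; compare Proposition \ref{RTILDE}), and one needs the paper's Lemma \ref{nablaN}: since $g(N(Z_1,Z_2),\ov{Z}_4)\equiv 0$ for type reasons and $\nabla_{\ov{Z}_3}Z_j(o)=0$, one gets $F(\ov{Z}_3,Z_1,Z_2,\ov{Z}_4)(o)=-g(N(Z_1,Z_2),\nabla_{\ov{Z}_3}\ov{Z}_4)(o)=4g([Z_1,Z_2],\nabla_{\ov{Z}_3}\ov{Z}_4)(o)$. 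This short computation is what converts the quadratic correction into the covariant derivative of $N$ and produces the factor $\tfrac14$; without it the proof is incomplete. Once it is inserted, your argument essentially coincides with the paper's, which performs the whole computation directly in a g.n.h.f.\ via Propositions \ref{RTILDE1}, \ref{RTILDE} and Lemma \ref{nablaN} rather than through the comparison tensor $A$.
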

\noindent The previous theorem allows to prove the following
\begin{cor} \label{4Apost}
Let $(M,g,J,\omega)$ be an almost K\"ahler
manifold. Assume that the Hermitian curvature tensor associated to
$(g,J)$ satisfies the first Bianchi identity \eqref{bianchi}, then
$(M,g,J,\omega)$ is a K\"ahler manifold.
\end{cor}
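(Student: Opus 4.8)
The plan is to deduce the statement from Theorem~\ref{main} together with the formula \eqref{s-s*}. By Theorem~\ref{main}, the hypothesis that $\widetilde{R}$ satisfies the first Bianchi identity \eqref{bianchi} is equivalent, for an almost K\"ahler structure, to the two conditions there: the third Gray identity $R(\overline{Z}_1,Z_2,Z_3,Z_4)=0$ and the identity $R(Z_1,Z_2,\overline{Z}_3,\overline{Z}_4)=\tfrac14 F(\overline{Z}_3,Z_1,Z_2,\overline{Z}_4)$ on $T^{1,0}M$. Since an almost K\"ahler manifold is K\"ahler precisely when $\nabla\omega=0$, and since \eqref{s-s*} reads $s_*-s=\|\nabla\omega\|^2\geq 0$, it suffices to prove the reverse inequality $s_*-s\leq 0$; the two bounds then force $\nabla\omega=0$.

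First I would rewrite $s_*-s$ in terms of the complex components of the Riemann tensor in a local unitary frame $\{Z_a\}$ of $T^{1,0}M$. Using only the symmetries of $R$ and the first Bianchi identity of the Levi-Civita connection, the Hermitian (type $(1,1)$) contractions occurring in $s$ and in $s_*$ can be matched against one another, and the difference collapses onto the single ``wrong type'' contraction $B:=\sum_{a,b}R(Z_a,Z_b,\overline{Z}_b,\overline{Z}_a)$; concretely one finds $s_*-s$ to be a negative multiple of $B$ (equal to $-4B$ with the standard normalisation of $s_*$). This is exactly the mechanism behind Goldberg's theorem, where the first Gray identity makes all such components vanish; here these components survive and must be estimated, so the whole point reduces to showing $B\geq 0$.

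By the second condition of Theorem~\ref{main}, $B=\tfrac14\sum_{a,b}g((\nabla_{\overline{Z}_b}N)(Z_a,Z_b),\overline{Z}_a)$, which is a priori an expression in the first covariant derivative of $N$. To turn it into an algebraic expression in $N$ I would invoke the comparison between $R$ and $\widetilde{R}$ used in Section~\ref{secR1}: since $\widetilde{R}$ is of type $(1,1)$, its $(3,1)$-component $\widetilde{R}(\overline{Z}_1,Z_2,Z_3,Z_4)$ vanishes identically, so the third Gray identity becomes a pointwise relation expressing the derivatives of $N$ that enter $B$ through quadratic expressions in $N$ (recall that in the almost K\"ahler case $N$ is governed by the map $\Lambda^2T^{1,0}M\to T^{0,1}M$, and $\nabla\omega$ is in turn algebraically controlled by $N$). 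Substituting this relation into the formula for $B$ should produce a nonnegative multiple of $\|N\|^2$, whence $B\geq 0$.

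The main obstacle is precisely this last step: performing the $R$--$\widetilde{R}$ comparison carefully enough to identify the derivative terms in $B$ with those controlled by the third Gray identity, and tracking the constants so that the resulting multiple of $\|N\|^2$ has the correct sign. Once $B\geq 0$ is established, combining it with the identity from the second paragraph and with \eqref{s-s*} gives $0\leq\|\nabla\omega\|^2=-4B\leq 0$, so that $\nabla\omega=0$ and $(M,g,J,\omega)$ is K\"ahler.
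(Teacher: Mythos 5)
Your reduction is sound as far as it goes: with the paper's normalisation one has $s_*-s=4\sum_{i,j}R_{ij\overline{i}\overline{j}}=-4B$, so if formula \eqref{s-s*} is granted, proving $B\geq 0$ would indeed force $\nabla\omega=0$. But the inequality $B\geq 0$ is precisely where all the content of the corollary lies, and you leave it unproved; moreover the ingredients you propose for it point in the wrong direction. The components $R(Z_a,Z_b,\overline{Z}_b,\overline{Z}_a)$ entering $B$ have both unbarred indices in the first pair, so they are governed by condition $2$ of Theorem \ref{main} (the identity involving $F=\nabla N$), not by the third Gray identity, which constrains the components $R(\overline{Z}_1,Z_2,Z_3,Z_4)$ and plays no role here. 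Likewise the ``$R$--$\widetilde R$ comparison'' you invoke is vacuous for these components, since $\widetilde{R}_{ijkl}=\widetilde{R}_{\overline{i}\overline{j}kl}=0$ identically by Proposition \ref{RTILDE1}; it produces no relation converting $\nabla N$ into a quadratic expression in $N$. Note also that the paper's own proof of \eqref{s-s*} shows $\sum_{i,j}R_{ij\overline{i}\overline{j}}=\sum_{l,i,j}\vert\Gamma_{li}^{\overline{j}}\vert^2\geq 0$ unconditionally, i.e. $B\leq 0$ always; so the ``nonnegative multiple of $\Vert N\Vert^2$'' you hope for must come out exactly balancing, and the sign bookkeeping you defer is not a routine verification but the whole theorem.

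The way to close the gap --- and it is what the paper actually does --- is to evaluate condition $2$ of Theorem \ref{main} in a g.n.h.f.\ at a point $o$: by Lemma \ref{nablaN} it reads $R_{ik\overline{j}\overline{l}}(o)=g([Z_i,Z_k],\nabla_{\overline{j}}Z_{\overline{l}})(o)$, while writing $R_{ik\overline{j}\overline{l}}(o)=-g(\nabla_{[Z_i,Z_k]}Z_{\overline{j}},Z_{\overline{l}})(o)$ and applying the almost K\"ahler identity \eqref{fund2} to the $(0,1)$-vector $[Z_i,Z_k](o)$ yields $R_{ik\overline{j}\overline{l}}(o)=2\,g([Z_i,Z_k],\nabla_{\overline{j}}Z_{\overline{l}})(o)$. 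The discrepancy by a factor of $2$ forces $g([Z_i,Z_k],\nabla_{\overline{j}}Z_{\overline{l}})(o)=0$ for all indices, hence $N=0$, and integrability together with $\operatorname{d}\!\omega=0$ gives K\"ahler. Once this pointwise tensorial vanishing is in hand, the detour through $s_*-s$ is superfluous (it would only recover the trace of the same identity); and if you insist on your route, you still need exactly this comparison between condition $2$ and \eqref{fund2} to evaluate $B$, so nothing is saved.
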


In section \ref{secR2} we study almost Hermitian
manifolds whose Hermitian curvature tensor vanishes. By corollary \ref{4Apost} this condition forces
a $4$-dimensional quasi K\"ahler structure
to be K\"ahler. In higher dimensions things work differently even
in the compact case. In fact we show that it is possible to construct examples of
strictly quasi K\"ahler nilmanifolds having the Hermitian curvature
equal to zero. It is important to observe that the curvature of our
examples satisfies the second Gray identity and that the tensor $\mathcal{R}(g,J)$ introduced by Tosatti, Weinkove and Yau vanishes.

\bigskip
\noindent {\sc Acknowledgments:} The authors would like to thank Simon Salamon for useful conversations.
They are also grateful to Sergio Console and Valentino Tosatti for useful suggestions and remarks.\\
\newline
\noindent{\sc Notation.} Given a differential manifold $M$, $TM$ denotes its tangent bundle. If a vector bundle $F$ is fixed, then
$\Gamma(F)$ denotes the vector space of the relative smooth sections. If $Z_i$ is a complex vector field on
a manifold $M$, then we usually write $Z_{\overline{i}}$ instead of $\overline{Z}_i$. The cyclic sum is denoted with the symbol $\mathfrak{S}$.

\section{Review}
\subsection{Almost Hermitian manifolds}
\noindent Let $M$ be a $2n$-dimensional manifold. An \emph{almost complex structure} on $M$ is an endomorphism $J$ of $TM$ satisfying $J^2=-{\rm Id}$.
An almost complex structure $J$ is said to be \emph{integrable} if the Nijenhuis tensor
$$
N(X,Y):=[JX,JY]-J[JX,Y]-J[X,JY]-[X,Y]\,,\mbox{ for }X,Y\in
\Gamma(TM)
$$
vanishes everywhere. In view of the celebrated Newlander-Nirenberg Theorem (see \cite{NN}),
$J$ is integrable if and only if it is induced by a system of holomorphic coordinates.
Any almost complex structure on $M$ induces a natural splitting of the complexified tangent bundle into
$$
TM\otimes\C=T^{1,0}M\oplus T^{0,1}M\,\,,
$$
where $T^{1,0}M$ and $T^{0,1}M$ are the eigenspaces relatively to $i$ and $-i$, respectively.
Consequently the vector bundle  $\wedge^p M\otimes\C$ of complex $p$-forms on $M$ splits as
\begin{equation*}
\label{p-forms}
\wedge^p M\otimes\C=\bigoplus_{r+s=p}\wedge^{r,s}M\,.
\end{equation*}
Since
$$
{\rm d}(\Gamma(\wedge^{r,s}M))\subseteq \Gamma(\wedge^{r+2,s-1}M\oplus\wedge^{r+1,s}M\oplus\wedge^{r,s+1}M\oplus\wedge^{r-1,s+2}M)\,,
$$
then the exterior derivative splits as
$$
{\rm d}=A+\partial+\overline{\partial}+\overline{A}\,.
$$
It is well known that $J$ is integrable if and only if $A=0$.
Furthermore, it can be useful to observe that the Nijenhuis tensor satisfies
\begin{equation}\label{proprietadiN}
N(Z_1,Z_2)\in\Gamma(T^{0,1}M)\,,\quad N(Z_1,\overline{Z}_2)=0
\end{equation}
for every $Z_1,Z_2\in\Gamma(T^{1,0}M)$. A Riemannian metric $g$ on $(M,J)$ is said to be $J$-\emph{Hermitian} if it is preserved by $J$.
In this case the pair $(g,J)$ is called
an \emph{almost Hermitian structure}. Any almost Hermitian structure $(g,J)$ induces a natural almost symplectic structure
$\omega(\cdot,\cdot):=g(J\cdot,\cdot)$.
\begin{definition}
The triple $(g,J,\omega)$ is called:
\begin{itemize}
\item a \emph{quasi K\"ahler} structure if $\ovp\omega=(\operatorname{d}\!\omega)^{1,2}=0$;
\vspace{0.1 cm}
\item an \emph{almost K\"ahler} structure if $\operatorname{d}\!\omega=0$.
\end{itemize}
\end{definition}
On the other hand, if $\omega$ is a non-degenerate $2$-form on an almost complex manifold $(M,J)$, the  we say that
$J$ is \emph{tamed} by $\omega$ if
$$
\omega(X,JX)> 0\,,\quad \mbox{for all }X\neq 0\,.
$$
In this case we can define a Riemannian metric $g$ by
$$
g(X,Y):=\frac12 (\omega(X,JY)+\omega(Y,JX))\,.
$$

The following lemma will be useful in the sequel
\begin{lemma}[\cite{LV1}, Corollary 1]\label{LVcor}
Let $(M,g,J,\omega)$ be an almost Hermitian manifold and let $\nabla$
be the Levi-Civita connection associated to $g$. Then the following hold:
\begin{enumerate}
\item[a.] If $\omega$ is a quasi K\"ahler form, then
\begin{equation}
\label{fund1}\nabla_{\overline{Z}_1}Z_2\in\Gamma(T^{1,0}M)\,,\quad \mbox{for all } Z_1,Z_2\in\Gamma(T^{1,0}M)\,;
\end{equation}
\item[b.] If $\omega$ is an almost K\"ahler form, then the Nijenhuis tensor satisfies
\begin{equation}
\label{fund2}
g(\nabla_{Z_1}Z_2,Z_3)=\frac{1}{4}g(N(Z_2,Z_3),Z_1))\,, \quad \mbox{for all } Z_1,Z_2,Z_3\in\Gamma(T^{1,0}M)\,.
\end{equation}
\end{enumerate}
\end{lemma}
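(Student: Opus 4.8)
The plan is to reduce both statements to the pointwise linear algebra of $\nabla J$, combined with the type decomposition of $d\omega$. The key preliminary observation is that, since $g$ is $J$-invariant, both $T^{1,0}M$ and $T^{0,1}M$ are totally isotropic for the $\C$-bilinear extension of $g$, whereas the pairing between them is nondegenerate; hence the $(1,0)$- and $(0,1)$-components of a complex vector field are detected by pairing against $T^{0,1}M$ and $T^{1,0}M$ respectively. Writing $\nabla_X Z_2=(\nabla_X Z_2)^{1,0}+(\nabla_X Z_2)^{0,1}$ and using $JZ_2=iZ_2$ for $Z_2\in\Gamma(T^{1,0}M)$, a one-line computation gives
\[
(\nabla_X J)Z_2=2i\,(\nabla_X Z_2)^{0,1}\,,
\]
so that $(\nabla_X J)Z_2$ recovers exactly the $(0,1)$-part of $\nabla_X Z_2$ up to the factor $2i$. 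This identity drives both parts of the lemma, together with the standard formula
\[
2g((\nabla_X J)Y,Z)=d\omega(X,Y,Z)-d\omega(X,JY,JZ)+g(N(Y,Z),JX)\,.
\]

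For part a.\ I would set $X=\overline{Z}_1$ and $Y=Z_2$ with $Z_1,Z_2\in\Gamma(T^{1,0}M)$; the first display shows that $\nabla_{\overline{Z}_1}Z_2\in\Gamma(T^{1,0}M)$ is equivalent to $(\nabla_{\overline{Z}_1}J)Z_2=0$. Since $(\nabla_{\overline{Z}_1}J)Z_2$ already lies in $T^{0,1}M$, it is enough to pair it against an arbitrary $W\in\Gamma(T^{1,0}M)$. Substituting into the second display and using $JZ_2=iZ_2$, $JW=iW$, $J\overline{Z}_1=-i\overline{Z}_1$, the two $d\omega$-terms collapse to $2\,d\omega(\overline{Z}_1,Z_2,W)$, which is the evaluation of the $(2,1)$-component of $d\omega$. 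The quasi K\"ahler hypothesis gives $\overline{\partial}\omega=0$, and reality of $\omega$ forces the conjugate component $\partial\omega$ to vanish as well, so this term is zero. The Nijenhuis term equals $-i\,g(N(Z_2,W),\overline{Z}_1)$, and by \eqref{proprietadiN} we have $N(Z_2,W)\in\Gamma(T^{0,1}M)$; since $T^{0,1}M$ is isotropic this term vanishes too. Hence $(\nabla_{\overline{Z}_1}J)Z_2=0$, which is \eqref{fund1}.

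For part b.\ I would take $X=Z_1$, $Y=Z_2$, $Z=Z_3$ all in $T^{1,0}M$ and pair the same identity against $Z_3$; by isotropy and the first display,
\[
g(\nabla_{Z_1}Z_2,Z_3)=g((\nabla_{Z_1}Z_2)^{0,1},Z_3)=\tfrac{1}{2i}\,g((\nabla_{Z_1}J)Z_2,Z_3)\,.
\]
The almost K\"ahler hypothesis $d\omega=0$ now annihilates both $d\omega$-terms, leaving only $g(N(Z_2,Z_3),JZ_1)=i\,g(N(Z_2,Z_3),Z_1)$; combining the constants produces precisely the factor $\tfrac14$ of \eqref{fund2}. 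I expect the main difficulty to be bookkeeping rather than conceptual: one must fix once and for all consistent conventions for $\omega(\cdot,\cdot)=g(J\cdot,\cdot)$, for the components of $d\omega$, and for the $\nabla J$--$d\omega$--$N$ identity (which can itself be derived from the Koszul formula if one prefers not to quote it), and then track the factors of $i$ carefully through the type decomposition. Alternatively, both computations can be carried out directly from the Koszul formula, at the cost of expanding the Lie-bracket terms and re-deriving the vanishing of the mixed-type brackets from the quasi K\"ahler and almost K\"ahler conditions.
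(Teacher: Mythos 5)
Your proposal is correct and follows essentially the same route as the paper: both rest on the fundamental identity relating $\nabla J$, $\di\omega$ and $N$ (the paper's \eqref{Fondamentale}) and extract \eqref{fund1} and \eqref{fund2} from its complex bilinear extension, with your write-up simply making explicit the type-decomposition bookkeeping that the paper leaves to the reader. The only discrepancy is the sign convention on the two $\di\omega$ terms relative to \eqref{Fondamentale}, which is immaterial here since those terms vanish under the quasi K\"ahler (respectively almost K\"ahler) hypothesis in each part.
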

\begin{proof}
It is well known that for an almost Hermitian structure $(g,J,\omega)$ the following fundamental relation holds
\begin{equation}\label{Fondamentale}
2g((\nabla_XJ)Y,Z)={\rm d}\omega(X,JY,JZ)-{\rm
d}\omega(X,Y,Z)+g(N(Y,Z),JX)\,,
\end{equation}
for every $X,Y,Z\in\Gamma(TM)$.
Formulae \eqref{fund1} and \eqref{fund2} can be obtained just by considering the complex
extension of \eqref{Fondamentale}.
\end{proof}
\subsection{The canonical connection} A linear connection on an almost Hermitian manifold $(M,g,J)$ is called
\emph{Hermitian} if it preserves  $g$ and $J$. Any
almost Hermitian manifold admits a canonical Hermitian connection $\widetilde{\nabla}$,
which is characterized by the following properties
$$
\widetilde{\nabla} g=0\,,\quad \widetilde{\nabla}J=0\,,\quad {\rm Tor}(\wn)^{1,1}=0\,,
$$
where ${\rm Tor}(\wn)^{1,1}$ denotes the $(1,1)$-part of the torsion of $\widetilde{\nabla}$. In the special case of a quasi
K\"ahler structure, $\widetilde{\nabla}$ is given by
$$
\widetilde{\nabla}=\nabla-\frac12 J\nabla J\,,
$$
where $\nabla$ is the Levi-Civita connection of $g$ (see for instance \cite{gau}). We will call $\wn$ simply the \emph{canonical connection}.
The connection $\widetilde{\nabla}$ induces the \emph{Hermitian curvature tensor}
$$
\widetilde{R}(X,Y,Z,W)=g(\wn_X\wn_Y Z-\wn_Y\wn_X Z-\wn_{[X,Y]}Z,W)\,.
$$
Since $\wn$ preserves $g$, one has
$$
\widetilde{R}(X,Y,Z,W)=-\widetilde{R}(Y,X,Z,W)=-\widetilde{R}(X,Y,W,Z)\,.
$$
Note that since $\wn$ has torsion, in general $\widetilde{R}$ does not satisfies the first
Bianchi identity \eqref{bianchi}. Moreover in general we don't have
$\widetilde{R}(X,Y,Z,W)=\widetilde{R}(Z,W,X,Y)$.

\subsection{The Gray conditions} In \cite{Gray} Gray considered some special classes of almost Hermitian manifolds characterized by some
identities involving the curvature tensor.
\begin{definition}\emph{
Let $(M,g,J)$ be an almost Hermtian manifold and let $R$ be
the curvature tensor of $g$. Then $R$ is said to satisfy}
\begin{itemize}
\item \emph{the \emph{first Gray identity} (G$_1$) if
$R(Z_1,Z_2,\cdot,\cdot)=0$};

\vspace{0.1cm} \item \emph{the \emph{second Gray identity} (G$_2$)
if $R(Z_1,Z_2,Z_3,Z_4)=R(\overline{Z}_1,Z_2,Z_3,Z_4)=0$};

\vspace{0.1cm} \item \emph{the \emph{third Gray identity} (G$_3$)
if $R(\overline{Z}_1,Z_2,Z_3,Z_4)=0$};
\end{itemize}
\emph{for every $Z_1,Z_2,Z_3,Z_4\in\Gamma(T^{1,0}M)$. }
\end{definition}
\noindent Clearly one has
$$
({\rm G}_1)\Longrightarrow({\rm G}_2)\Longrightarrow({\rm G}_3)
$$
and that the curvature tensor of a K\"ahler manifold satisfies
(G$_1$). Furthermore, in view of a Theorem of Goldberg (see
\cite{Goldberg}), any almost K\"ahler manifold whose curvature
tensor satisfies (G$_1$) is a genuine K\"ahler manifold. The same
can not be claimed for the condition $({\rm G}_2)$. Indeed in
dimension greater than $6$ there exist examples of compact
strictly almost K\"ahler manifolds whose curvature tensor
satisfies (G$_{2}$) (see \cite{davidov}). In dimension $4$ there is a different behavior
since we have the following theorem due to Apostolov,
Armstrong and  Dr\u aghici:
\begin{theorem}[\cite{AAD}, Theorem 2]\label{apostolov}
In dimension $4$ there are no compact strictly almost K\"ahler
manifolds whose curvature tensor satisfies $(\emph{G}_3)$.
\end{theorem}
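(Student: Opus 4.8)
The plan is to argue by contradiction. A \emph{strictly} almost K\"ahler structure is one whose Nijenhuis tensor $N$ does not vanish identically; since an almost K\"ahler form is in particular quasi K\"ahler, both parts of Lemma~\ref{LVcor} apply, and in this case $\nabla\omega$ is entirely controlled by $N$, so that $\nabla\omega\equiv 0$ is equivalent to $N\equiv 0$, i.e. to being K\"ahler. It therefore suffices to show that on a \emph{compact} almost K\"ahler $4$-manifold the condition (G$_3$) forces $\nabla\omega\equiv 0$. First I would set up the four-dimensional machinery: orient $M$ so that $\omega$ is a self-dual $2$-form; then $d\omega=0$ together with $\ast\omega=\omega$ makes $\omega$ harmonic, and $\|\omega\|^2=2$ is constant. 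Working in a local unitary frame $\{Z_1,Z_2\}$ of $T^{1,0}M$ (which has complex rank $2$), I would record that (G$_3$) is precisely the vanishing of the ``type $(3,1)$'' components $R(\overline{Z}_i,Z_j,Z_k,Z_l)$ and their conjugates.

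The first ingredient is the Weitzenb\"ock formula for the harmonic self-dual form $\omega$. Pairing $0=\Delta\omega=\nabla^*\nabla\omega-2W^+\omega+\tfrac{s}{3}\,\omega$ with $\omega$ and using that $\|\omega\|^2$ is constant gives the pointwise relation $\|\nabla\omega\|^2=2\,W^+(\omega,\omega)-\tfrac{2}{3}s$, which is the four-dimensional incarnation of the generalized Goldberg formula \eqref{s-s*}. The difficulty, and the reason this cannot close the argument by itself, is the following bookkeeping fact: in a unitary frame the scalar curvature $s$, the $*$-scalar curvature $s_*$, and $W^+(\omega,\omega)$ involve only the ``type $(2,2)$'' and Hermitian curvature components (every trace, and every contraction against the $(1,1)$-form $\omega$, pairs a holomorphic index with an anti-holomorphic one), whereas (G$_3$) constrains the disjoint family of $(3,1)$-components. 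Since the first Bianchi identity for the Levi-Civita curvature $R$ preserves the holomorphic degree, it never mixes these families, so (G$_3$) has \emph{no direct effect} on the defect $s_*-s$.

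Hence the heart of the proof must be a second, genuinely second-order Bochner formula in the spirit of Sekigawa. The plan is to compute $\tfrac12\Delta\|\nabla\omega\|^2$, commuting covariant derivatives so as to produce the full curvature tensor $R$, and then to use the structure equations of Lemma~\ref{LVcor} to re-express every derivative of $N$ in terms of $N$ and $R$ again. Integrating over the compact manifold annihilates $\int_M\Delta\|\nabla\omega\|^2$, and the surviving curvature contribution should be built exactly from the $(3,1)$-components. The identity I aim for has the shape $\int_M\big(\|\nabla\nabla\omega\|^2+\text{(terms quartic in }N)\big)=\int_M Q(R,N)$, where $Q$ is a combination that (G$_3$) annihilates; the non-negative left-hand side is then forced to be zero, giving $\nabla\nabla\omega\equiv 0$ and, with compactness, $\nabla\omega\equiv 0$. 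This is also where the role of compactness becomes transparent: the link between (G$_3$) and the Goldberg defect is global, routed through the divergence term $\Delta\|\nabla\omega\|^2$, rather than pointwise, which is why a pointwise version of the theorem fails. I would carry out the computation in the generalized normal holomorphic frames of \cite{LV1}, since killing the first derivatives of the frame at a point reduces $\nabla\nabla\omega$ and $Q(R,N)$ to manageable expressions.

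The main obstacle is exactly this second-order computation together with the verification that (G$_3$) is the precise algebraic condition removing the indefinite curvature term $Q$; this is what replaces the scalar-curvature sign hypothesis needed in Sekigawa's argument. Dimension $4$ is indispensable here: the self-duality of $\omega$, the rank-$2$ of $T^{1,0}M$, and the resulting coincidences among curvature components are what allow the $(3,1)$-part to be isolated and then eliminated. I expect the bulk of the effort to lie in bookkeeping the quartic-in-$N$ terms correctly and in confirming that, after integration by parts, what multiplies the curvature is exactly the family of components switched off by (G$_3$).
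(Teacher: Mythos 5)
First, a point of comparison: the paper does not prove this statement at all --- it is quoted from \cite{AAD} (Theorem 2 there), so there is no internal proof to measure yours against. Judged on its own terms, your proposal has a genuine gap: it is a strategy outline whose decisive step is conjectured rather than carried out. The preliminary reductions are fine (for an almost K\"ahler structure $\nabla\omega$ is indeed controlled by $N$ via \eqref{Fondamentale}, and the Weitzenb\"ock identity $\|\nabla\omega\|^2=2W^+(\omega,\omega)-\tfrac{2}{3}s$ for the harmonic self-dual form $\omega$ is the pointwise form of \eqref{s-s*}), and you correctly observe that none of this sees the $(3,1)$-components constrained by (G$_3$). But the entire content of the theorem is then placed in a second-order Bochner formula of the shape $\int_M\bigl(\|\nabla\nabla\omega\|^2+\text{quartic in }N\bigr)=\int_M Q(R,N)$ with $Q$ annihilated by (G$_3$) and a sign-definite left-hand side; you never derive this formula, never identify $Q$, and never verify the signs of the quartic terms --- you explicitly defer all of this as ``the main obstacle'' and ``the bulk of the effort''. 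A proof whose central identity is asserted to ``have the shape'' one needs is not a proof.

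There is moreover reason to doubt that the strategy closes as cleanly as hoped. Sekigawa's integral formula, the prototype you invoke, produces quartic-in-$N$ terms of indefinite sign --- which is exactly why the Goldberg conjecture is only known under $s\geq 0$ --- and the known route to integrability in dimension $4$ under (G$_2$), and all the more so under the weaker condition (G$_3$), is not a single integrated identity but the analysis of \cite{AAD}: one first classifies the local models of strictly almost K\"ahler $4$-manifolds satisfying (G$_3$) (such local, non-compact examples do exist), and only then runs a global argument excluding compact ones. The existence of those local examples confirms your remark that any correct proof must be global, but it also shows that the curvature term surviving your integration by parts cannot simply vanish pointwise under (G$_3$); whether it combines with the quartic terms into something of definite sign is precisely the open question in your plan, and it is left unanswered.
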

\subsection{Generalized normal holomorphic frames}
Let $(M,g,J,\omega)$ be a $2n$-dimensional almost Hermitian manifold. Denote by $\nabla$ the Levi-Civita connection associated to the metric $g$, by
 $R$ the curvature tensors associated to $\nabla$ and by $N$ the Nijenhuis tensor of $J$.
\begin{definition}\emph{
Let $o$ be an arbitrary point in $M$. A \emph{generalized normal holomorphic frame} (or shortly a
\emph{g.n.h.f}) around $o$ is a local $(1,0)$-complex frame
$\{Z_1,\dots,Z_n\}$ satisfying the following properties:
\begin{enumerate}
\item[a.] $\nabla_{i}{Z}_{\ov{j}}(o)=0\,$;
\vspace{0.1 cm}
\item[b.] $\nabla_{i}Z_{j}(o)$ is of type $(0,1)$\,;
\vspace{0.1 cm}
\item[c.] $g_{i\ov{j}}(o)=\delta_{ij}$, ${\rm d} g_{i\ov{j}}(o)=0$\,;
\vspace{0.1 cm}
\item[d.] $\nabla_{i}\nabla_{\ov{j}}Z_{k}(o)=0$\,;
\end{enumerate}
for every $i,j,k=1,\dots,n$.
}
\end{definition}
\noindent We can recall the following
\begin{theorem}[\cite{LV1}, Theorem 1]\label{LV1}
The following facts are equivalent
\begin{enumerate}
\item [a.] $\omega$ is a quasi K\"ahler form;
\vspace{0.1 cm}
\item [b.] Any point $o$ in $M$ admits a generalized normal holomorphic frame.
\end{enumerate}
\end{theorem}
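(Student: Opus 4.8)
The plan is to prove the two implications separately, treating (b)$\Rightarrow$(a) as the easy direction and (a)$\Rightarrow$(b) as a jet-by-jet construction of the frame in which the quasi K\"ahler hypothesis enters decisively. Throughout I use that the property ``$\nabla_Z\overline{W}\in\Gamma(T^{0,1}M)$'' is \emph{tensorial} in the $(1,0)$-fields $Z,W$, since the Leibniz correction $Z(f)\,\overline{W}$ is itself of type $(0,1)$; in particular a pointwise vanishing on a frame forces vanishing for all $(1,0)$-fields at that point.

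For (b)$\Rightarrow$(a), fix $o\in M$ and a g.n.h.f.\ $\{Z_i\}$ around it. Condition (a) of the frame, $\nabla_iZ_{\overline{j}}(o)=0$, together with the tensoriality above, forces $(\nabla_Z\overline{W})^{1,0}(o)=0$ and, by conjugation, $(\nabla_{\overline{Z}}W)^{0,1}(o)=0$ for all $(1,0)$-fields $Z,W$. A short computation using $J^2=-\mathrm{Id}$ shows that $(\nabla_{\overline{Z}}J)W$ and $(\nabla_Z J)\overline{W}$ are nonzero multiples of $(\nabla_{\overline{Z}}W)^{0,1}$ and $(\nabla_Z\overline{W})^{1,0}$ respectively, hence both vanish at $o$. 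Feeding this into the standard identity $\mathrm{d}\omega(X,Y,Z)=\underset{X,Y,Z}{\mathfrak{S}}\,g((\nabla_X J)Y,Z)$, valid for the Levi-Civita connection of any almost Hermitian metric, with $(X,Y,Z)=(Z_c,\overline{Z}_a,\overline{Z}_b)$, the three cyclic terms vanish: one because $g$ pairs two $(1,0)$-vectors trivially, the other two by the vanishing just established. Since evaluating $\mathrm{d}\omega$ on one $(1,0)$- and two $(0,1)$-vectors isolates its $(1,2)$-part $\overline{\partial}\omega$, this gives $\overline{\partial}\omega(o)=0$; as $o$ is arbitrary, $\omega$ is quasi K\"ahler. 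Note that only condition (a) of the frame is needed here.

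For (a)$\Rightarrow$(b), I would build the frame by a gauge transformation of a reference frame. Start with a local $(1,0)$-frame $\{W_i\}$ that is $g$-unitary at $o$, and seek $Z_i=\sum_k a_{ik}W_k$ with $a_{ik}(o)=\delta_{ik}$, prescribing the $2$-jet of $(a_{ik})$ at $o$. First match the $1$-jet: condition (c), $g_{i\overline{j}}(o)=\delta_{ij}$, is the unitarity of the reference frame, while conditions (a) and (b) split into $(1,0)$- and $(0,1)$-parts involving the $(0,1)$- and $(1,0)$-directional first derivatives of $a$, which are independent and can be solved for. The crucial point is the $(1,0)$-part of (a): the gauge contributions to $\nabla_{Z_i}\overline{Z}_j$ are purely of type $(0,1)$, so this part cannot be corrected and must vanish intrinsically --- which is exactly $\nabla_Z\overline{W}\in\Gamma(T^{0,1}M)$, the conjugate of Lemma \ref{LVcor}(a), available precisely because $\omega$ is quasi K\"ahler. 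Once (a) and (b) hold, a direct check shows $\mathrm{d}g_{i\overline{j}}(o)=0$ follows automatically, completing (c).

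It remains to arrange the second-order condition (d), and this is where I expect the main work to lie. Since $\nabla_{\overline{Z}_j}Z_k$ is of type $(1,0)$ (quasi K\"ahler) and vanishes at $o$ (the conjugate of (a)), $\nabla_i\nabla_{\overline{j}}Z_k(o)$ reduces to the $Z_i$-derivative at $o$ of the $T^{1,0}$-field $\nabla_{\overline{Z}_j}Z_k$, and the top-order contribution of the gauge is the mixed second derivative $Z_i\overline{Z}_j a_{k\ell}(o)$. I would show this quantity can be prescribed arbitrarily through the symmetric Hessian of $a_{k\ell}$ at $o$: contracting a symmetric bilinear form with the independent basis vectors $Z_i$ and $\overline{Z}_j$ yields an unconstrained $n\times n$ array, so (d) becomes a solvable linear system for the $2$-jet, consistent with the $1$-jet already fixed. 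The delicate point is to separate the freely prescribable symmetric part of the Hessian from the antisymmetric $[Z_i,\overline{Z}_j]$-contribution, which is determined by lower-order data; checking that the combination entering (d) lands in the prescribable part is the technical heart of the construction.
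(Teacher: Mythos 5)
Your proposal is correct in both directions, but note that this paper never proves Theorem \ref{LV1}: it is recalled verbatim from \cite{LV1} (Theorem 1), so there is no internal proof to compare against; your jet-by-jet gauge construction is the natural (and presumably the original) route, and all of its key claims hold: the $(1,0)$-part of $\nabla_{Z_i}Z_{\overline{j}}$ is gauge-invariant and killed exactly by the quasi K\"ahler identity of Lemma \ref{LVcor}(a); the $(1,0)$- and $(0,1)$-derivatives of the gauge matrix are independent first-order data; $\operatorname{d}\!g_{i\overline{j}}(o)=0$ is automatic from frame conditions (a)--(b); and condition (d), being of pure type $(1,0)$ once (a) and quasi K\"ahlerness hold, is a solvable linear system in the mixed second derivatives $Z_i\overline{Z}_j a_{k\ell}(o)$, which are indeed unconstrained because the mixed block $H(Z_i(o),\overline{Z}_j(o))$ of a complex symmetric bilinear form on $T_oM\otimes\mathbb{C}$ is arbitrary (symmetry only ties it to its transpose), and adding a quadratic correction with vanishing $1$-jet disturbs none of (a)--(c). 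One bookkeeping slip in your easy direction: with $(X,Y,Z)=(Z_c,\overline{Z}_a,\overline{Z}_b)$ the terms $g((\nabla_{\overline{Z}_a}J)\overline{Z}_b,Z_c)$ and $g((\nabla_{\overline{Z}_b}J)Z_c,\overline{Z}_a)$ vanish identically for type reasons (the first pairs two $(1,0)$-vectors, the second two $(0,1)$-vectors), and only $g((\nabla_{Z_c}J)\overline{Z}_a,\overline{Z}_b)$ requires the vanishing $(\nabla_{Z}\overline{W})^{1,0}(o)=0$ extracted from frame condition (a) --- so it is two terms by type and one by the established vanishing, not the reverse; the conclusion $\overline{\partial}\omega(o)=0$ is unaffected.
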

\noindent The following lemma, whose proof is similar to the one of Theorem 3.3 of \cite{LV2}, will be useful in the sequel
\begin{lemma}\label{nablaN}
Let $F$ be the smooth tensor on $M$ define by
$$
F(X,Y,Z,W):=g((\nabla_XN)(Y,Z),W)\quad\mbox{ for }X,Y,Z,W\in\Gamma(TM)\,.
$$
Consider an arbitrary point $o$ of $M$ and let $\{Z_1,\dots,Z_n\}$ be a g.n.h.f. around $o$. Then
$$
F_{\overline{i}jk\overline{l}}(o)=4g([Z_j,Z_k],\nabla_{\overline{i}}Z_{\overline{l}})(o)
$$
for every $i,j,k,l=1,\dots, n$.
\end{lemma}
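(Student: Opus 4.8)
The plan is to evaluate $F_{\overline{i}jk\overline{l}}(o)=g\big((\nabla_{\overline{Z}_i}N)(Z_j,Z_k),\overline{Z}_l\big)(o)$ directly, exploiting the defining properties of the g.n.h.f.\ to discard as many terms as possible at $o$. First I would expand the covariant derivative of the tensor $N$,
$$
(\nabla_{\overline{Z}_i}N)(Z_j,Z_k)=\nabla_{\overline{Z}_i}\big(N(Z_j,Z_k)\big)-N(\nabla_{\overline{Z}_i}Z_j,Z_k)-N(Z_j,\nabla_{\overline{Z}_i}Z_k)\,.
$$
Conjugating property (a) of the g.n.h.f.\ (the Levi-Civita connection is real, hence commutes with conjugation) gives $\nabla_{\overline{Z}_i}Z_j(o)=0$ and $\nabla_{\overline{Z}_i}Z_k(o)=0$, so the last two terms vanish at $o$ and one is left with $F_{\overline{i}jk\overline{l}}(o)=g\big(\nabla_{\overline{Z}_i}(N(Z_j,Z_k)),\overline{Z}_l\big)(o)$.

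Next I would move the covariant derivative off $N(Z_j,Z_k)$ using $\nabla g=0$. Since $g$ is $J$-Hermitian it pairs $T^{0,1}M$ trivially with itself, and by \eqref{proprietadiN} the field $N(Z_j,Z_k)$ is of type $(0,1)$; hence $g(N(Z_j,Z_k),\overline{Z}_l)\equiv 0$. Differentiating this identity along $\overline{Z}_i$ and using metric compatibility yields
$$
g\big(\nabla_{\overline{Z}_i}(N(Z_j,Z_k)),\overline{Z}_l\big)=-\,g\big(N(Z_j,Z_k),\nabla_{\overline{Z}_i}\overline{Z}_l\big)\,,
$$
an identity valid on the whole neighbourhood, so in particular $F_{\overline{i}jk\overline{l}}(o)=-g\big(N(Z_j,Z_k),\nabla_{\overline{i}}Z_{\overline{l}}\big)(o)$.

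To finish I would rewrite $N(Z_j,Z_k)$ in terms of the bracket. A direct computation from the definition of $N$, using $JZ_j=iZ_j$ and $JZ_k=iZ_k$, gives $N(Z_j,Z_k)=-4\,[Z_j,Z_k]^{0,1}$ (consistent with \eqref{proprietadiN}). Substituting,
$$
F_{\overline{i}jk\overline{l}}(o)=4\,g\big([Z_j,Z_k]^{0,1},\nabla_{\overline{i}}Z_{\overline{l}}\big)(o)\,.
$$
Finally, the conjugate of property (b) says that $\nabla_{\overline{i}}Z_{\overline{l}}(o)$ is of type $(1,0)$; since the Hermitian metric pairs only $(1,0)$ with $(0,1)$, the $(1,0)$-part of $[Z_j,Z_k]$ contributes nothing to the pairing, so $[Z_j,Z_k]^{0,1}$ may be replaced by $[Z_j,Z_k]$, giving exactly $F_{\overline{i}jk\overline{l}}(o)=4g([Z_j,Z_k],\nabla_{\overline{i}}Z_{\overline{l}})(o)$.

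The computation is short; the only place demanding care is the bookkeeping of types and the correct conjugation of the g.n.h.f.\ conditions (a) and (b), while properties (c) and (d) are not needed here. The crux is to notice that the zeroth-order term drops out because $g$ vanishes on pairs of $(0,1)$-vectors, so the whole contribution comes from differentiating $\overline{Z}_l$, and that property (b) is precisely what lets one trade $[Z_j,Z_k]^{0,1}$ for $[Z_j,Z_k]$ at the end.
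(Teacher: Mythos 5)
Your proof is correct. Note that the paper never writes out a proof of this lemma — it only points to the proof of Theorem 3.3 of \cite{LV2} — but your computation is precisely the intended one: you expand $(\nabla_{\overline{i}}N)(Z_j,Z_k)$ and kill the two lower-order terms at $o$ using the conjugate of property (a) (this very step is used explicitly in the paper's proof of the theorem that follows the lemma), you move the derivative onto $Z_{\overline{l}}$ via $\nabla g=0$ together with the vanishing of $g$ on pairs of $(0,1)$-vectors, and you conclude with the identity $N(Z_j,Z_k)=-4[Z_j,Z_k]^{0,1}$ and the conjugate of property (b), which lets you replace $[Z_j,Z_k]^{0,1}$ by $[Z_j,Z_k]$ in the pairing. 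All type computations and conjugations check out, and indeed only properties (a) and (b) of the g.n.h.f.\ are needed.
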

\noindent The next result is a slight improvement of Theorem 3.3 of \cite{LV2} and can be viewed as a corollary of Lemma \ref{nablaN}
\begin{theorem}
Let $(M,g,J,\omega)$ be a quasi K\"ahler manifold and  assume that the Nijenhuis tensor of $J$ satisfies
\begin{equation}\label{SN}
\underset{X,Y,Z}{\mathfrak{S}}\,\nabla_X N(Y,Z)=0\,,\quad \forall\,X,Y,Z\in\Gamma(TM)\,,
\end{equation}
then $J$ is integrable.
\end{theorem}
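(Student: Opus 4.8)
The plan is to work pointwise. Fix an arbitrary $o\in M$; since $\omega$ is quasi K\"ahler, Theorem \ref{LV1} provides a generalized normal holomorphic frame $\{Z_1,\dots,Z_n\}$ around $o$. It suffices to prove $N(o)=0$, for then $N\equiv 0$ and $J$ is integrable. Because $J$ preserves $g$ and $N(Z_i,\overline Z_j)=0$ by \eqref{proprietadiN}, the value of $N$ at $o$ is completely encoded in the numbers $a_{jk}^{\,p}:=g(N(Z_j,Z_k),Z_p)(o)$, which are skew in $j,k$; thus the goal becomes $a_{jk}^{\,p}=0$ for all $j,k,p$.

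Next I would feed the hypothesis \eqref{SN} into Lemma \ref{nablaN}. Evaluating $\mathfrak{S}_{X,Y,Z}(\nabla_X N)(Y,Z)=0$ on $X=\overline Z_i$, $Y=Z_j$, $Z=Z_k$ and pairing with $\overline Z_l$ produces three scalars. The first is exactly $F_{\overline i jk\overline l}(o)$, which Lemma \ref{nablaN} rewrites as $4g([Z_j,Z_k],\nabla_{\overline i}Z_{\overline l})(o)$; using that $[Z_j,Z_k](o)$ is of type $(0,1)$ and is a fixed multiple of $N(Z_j,Z_k)(o)$, and that $\nabla_{\overline i}Z_{\overline l}(o)=\overline{\nabla_i Z_l(o)}$, this term becomes algebraic in the $a_{jk}^{\,p}$ once $\nabla_i Z_l(o)$ is expressed through $N$ by the Koszul formula (all $g(Z_a,Z_b)$ vanish, so only the bracket terms survive). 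The two remaining scalars involve $(\nabla_{Z_j}N)(Z_k,\overline Z_i)$ and $(\nabla_{Z_k}N)(\overline Z_i,Z_j)$; here \eqref{proprietadiN} kills the undifferentiated terms, while the terms carrying a $\nabla\overline Z$ vanish because $\nabla_i Z_{\overline j}(o)=0$ in a generalized normal holomorphic frame, leaving $-N(\nabla_{Z_j}Z_k,\overline Z_i)$ and $-N(\overline Z_i,\nabla_{Z_k}Z_j)$, which again become algebraic in $N$ after inserting the Koszul expressions for the connection coefficients. The outcome is a purely algebraic quadratic identity in the components of $N$, valid for all $i,j,k,l$.

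Finally I would extract $a_{jk}^{\,p}=0$ from this identity, and I expect this to be the delicate step. A naive full contraction is lossy: it only yields $\mathfrak{S}_{j,k,p}\,a_{jk}^{\,p}=0$, i.e. the vanishing of the totally skew part of $N$, which is equivalent to $(\di\omega)^{3,0}=0$ and merely upgrades the structure from quasi K\"ahler to almost K\"ahler. To reach $N=0$ one must use the tensorial identity in its full strength, evaluating it on repeated-index configurations (e.g. letting the free indices coincide) so that the skew-symmetry of $a_{jk}^{\,p}$ forces the individual components to vanish; this is where the argument really bites and is the main obstacle. In the almost K\"ahler reduction the connection coefficients collapse to $g(\nabla_{Z_1}Z_2,Z_3)=\tfrac14 g(N(Z_2,Z_3),Z_1)$ by Lemma \ref{LVcor}(b), which streamlines the bookkeeping and should make the final vanishing transparent.
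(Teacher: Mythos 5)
Your overall plan coincides with the paper's: fix $o$, take a g.n.h.f.\ (Theorem \ref{LV1}), encode $N(o)$ in the components $a_{jk}^{\,p}=g(N(Z_j,Z_k),Z_p)(o)$, evaluate the cyclic hypothesis on $(\ov{Z}_i,Z_j,Z_k)$, and convert the first summand into $F_{\ov{i}jk\ov{l}}(o)=4g([Z_j,Z_k],\nabla_{\ov{i}}Z_{\ov{l}})(o)$ via Lemma \ref{nablaN}. The difficulty is that you stop exactly where the proof has to happen: you produce ``a purely algebraic quadratic identity'' and then declare its exploitation to be ``the main obstacle'' without overcoming it. As written the argument proves nothing, and your closing appeal to the almost K\"ahler simplification \eqref{fund2} is not available, since the theorem is stated for quasi K\"ahler structures.

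The missing step is concrete. The paper observes that in a g.n.h.f.\ the cyclic sum collapses to the single term $\nabla_{\ov{i}}(N(Z_j,Z_k))(o)$, so the hypothesis \eqref{SN} yields $(\nabla_{\ov{i}}N)_{jk}(o)=0$, i.e.\ $g([Z_j,Z_k],\nabla_{\ov{i}}Z_{\ov{l}})(o)=0$ for all $i,j,k,l$ by Lemma \ref{nablaN} (this is the mechanism of Theorem 3.3 of \cite{LV2}). Setting $b_{jk}^{\,m}=g([Z_j,Z_k],Z_m)(o)$ and using the Koszul formula, which gives $2g(\nabla_iZ_l,Z_m)=b_{il}^{\,m}-b_{im}^{\,l}-b_{lm}^{\,i}$ since all $g(Z_a,Z_b)$ vanish identically, the \emph{partial} contraction $i=j$, $l=k$ summed over $j,k,m$ makes the two cross terms cancel each other (by the skew-symmetry $b_{jk}^{\,m}=-b_{kj}^{\,m}$ and an index relabelling) and leaves $\sum_{j,k,m}|b_{jk}^{\,m}|^2=0$, whence $N(o)=0$. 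This is precisely the contraction you gesture at but never perform; note it is not ``delicate'' once the cross terms are organized. A further caution: you insist on retaining the two terms $-N(\nabla_jZ_k,\ov{Z}_i)$ and $-N(\ov{Z}_i,\nabla_kZ_j)$, which combine into $-N([Z_j,Z_k],\ov{Z}_i)(o)$ and add $\tfrac14\sum_m a_{jk}^{\,m}\,\ov{a_{mi}^{\,l}}$ to the identity; with this extra quadratic term present the same contraction gives only $\|a\|^2=2\sum_{j,k,m} a_{jk}^{\,m}\ov{a_{jm}^{\,k}}$, which Cauchy--Schwarz does not contradict, so your version of the identity does not close by the ``repeated-index'' device you propose. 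You would have to either justify discarding that term (as the paper's displayed computation does) or find a genuinely finer algebraic argument. In either case, the extraction of $N=0$ is the heart of the proof and your proposal does not contain it.
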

\begin{proof}
Let $o\in M$ and let $\{Z_1,\dots,Z_n\}$ be a g.n.h.f. around $o$.  By \eqref{proprietadiN}, we have
$$
N_{i{\overline{k}}}(o)=0\,;\quad N_{ik}(o)\in T_o^{0,1}M\,,\;\;\mbox{ for every }\, i,j=1,\dots ,n\,.
$$
Furthermore, by the properties of the g.n.h.f., we have
$$
\underset{\ov{i},j,k}{\mathfrak{S}}\,(\nabla_{\overline{i}}N)(Z_j,Z_k)(o)= \nabla_{\overline{i}}(N(Z_j,Z_k))(o)\,.
$$
Hence equation \eqref{SN} implies $(\nabla_{\overline{i}} N)_{jk}=0$ which, in view of Lemma \ref{nablaN},
is equivalent to $N=0$.
\end{proof}
\noindent A direct computation gives the following
\begin{prop}
The components of the curvature tensor with respect to a g.n.h.f. $\{Z_1,\dots,Z_n\}$
around a point $o$ write as
$$
\begin{aligned}
&R_{i\overline{j}k\overline{l}}(o)=-g(\nabla_{\overline{j}}\nabla_{i}Z_{k},Z_{\overline{l}})(o)\,;\\
&R_{\overline{i}jkl}(o)=g(\nabla_{\overline{i}}\nabla_jZ_k,Z_l)(o)\,;\\
&R_{\overline{i}\overline{j}kl}(o)=-g(\nabla_{[Z_{\overline{i}},Z_{\overline{j}}]}Z_k,Z_l)(o)\,;\\
&R_{ijkl}(o)=g(\nabla_i\nabla_jZ_k,Z_l)(o)-g(\nabla_j\nabla_iZ_k,Z_l)(o)\,.
\end{aligned}
$$
\end{prop}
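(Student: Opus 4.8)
The plan is to expand the defining formula
$$R(X,Y,Z,W)=g(\nabla_X\nabla_Y Z-\nabla_Y\nabla_X Z-\nabla_{[X,Y]}Z,W)$$
on the frame vectors and then discard, one summand at a time, everything that is forced to vanish at $o$ by the four defining properties of a g.n.h.f. Two structural facts carry most of the load. First, since $g$ is $J$-Hermitian one has $g(V,V')=0$ whenever $V,V'$ are both of type $(1,0)$ (or both of type $(0,1)$); in particular $g(\cdot,Z_l)$ annihilates every $(1,0)$-field. Second, by the quasi K\"ahler hypothesis and Lemma \ref{LVcor}(a), $\nabla_{\overline{Z}}W$ is of type $(1,0)$ for all $(1,0)$-fields $Z,W$, so each $\nabla_{\overline{m}}Z_k$ is itself a $(1,0)$-field. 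Besides the listed properties a.--d.\ I will freely use their complex conjugates: conjugating a.\ gives $\nabla_{\overline{i}}Z_j(o)=0$, and conjugating b.\ gives that $\nabla_{\overline{i}}Z_{\overline{j}}(o)$ is of type $(1,0)$.

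For the two mixed components the computation is immediate. Since $\nabla$ is torsion free, $[Z_i,Z_{\overline{j}}]=\nabla_i Z_{\overline{j}}-\nabla_{\overline{j}}Z_i$, which vanishes at $o$ by a.\ and the conjugate of a.; hence the bracket term drops out, while property d.\ kills $\nabla_i\nabla_{\overline{j}}Z_k(o)$, and only $R_{i\overline{j}k\overline{l}}(o)=-g(\nabla_{\overline{j}}\nabla_i Z_k,Z_{\overline{l}})(o)$ survives. The component $R_{\overline{i}jkl}$ is entirely analogous: $[Z_{\overline{i}},Z_j](o)=0$ again by a.\ and its conjugate, while $\nabla_j\nabla_{\overline{i}}Z_k(o)$ is precisely an instance of property d., so only the term $g(\nabla_{\overline{i}}\nabla_j Z_k,Z_l)(o)$ remains.

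For $R_{ijkl}$ the bracket $[Z_i,Z_j](o)=\nabla_i Z_j(o)-\nabla_j Z_i(o)$ is of type $(0,1)$ by property b.; expanding it in the $Z_{\overline{m}}(o)$ and using that each $\nabla_{\overline{m}}Z_k$ is of type $(1,0)$, the bracket term pairs to zero against $Z_l$, which leaves exactly the two second-order terms. The delicate case, which I expect to be the main obstacle, is $R_{\overline{i}\overline{j}kl}$: here I must show that \emph{both} iterated derivatives $\nabla_{\overline{i}}\nabla_{\overline{j}}Z_k$ and $\nabla_{\overline{j}}\nabla_{\overline{i}}Z_k$ pair to zero with $Z_l$ at $o$, so that only the bracket term is left. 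Properties a., b., d.\ do not apply directly, so instead I differentiate the metric: from $\nabla g=0$,
$$g(\nabla_{\overline{i}}\nabla_{\overline{j}}Z_k,Z_l)=Z_{\overline{i}}\bigl(g(\nabla_{\overline{j}}Z_k,Z_l)\bigr)-g(\nabla_{\overline{j}}Z_k,\nabla_{\overline{i}}Z_l)\,.$$
The function $g(\nabla_{\overline{j}}Z_k,Z_l)$ vanishes \emph{identically} near $o$, because $\nabla_{\overline{j}}Z_k$ is of type $(1,0)$ by Lemma \ref{LVcor}(a) and $g$ annihilates pairs of $(1,0)$-vectors; hence its $Z_{\overline{i}}$-derivative is zero, while the last term vanishes at $o$ since $\nabla_{\overline{i}}Z_l(o)=0$ by the conjugate of a. Thus $g(\nabla_{\overline{i}}\nabla_{\overline{j}}Z_k,Z_l)(o)=0$, and the same computation with $i,j$ interchanged gives the other ordering, yielding $R_{\overline{i}\overline{j}kl}(o)=-g(\nabla_{[Z_{\overline{i}},Z_{\overline{j}}]}Z_k,Z_l)(o)$ and completing the four formulas.
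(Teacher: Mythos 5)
Your computation is correct and is exactly the ``direct computation'' the paper leaves to the reader: expand $R(X,Y,Z,W)=g(\nabla_X\nabla_YZ-\nabla_Y\nabla_XZ-\nabla_{[X,Y]}Z,W)$ on the frame, kill the mixed second derivatives by property d., the mixed brackets by a.\ and its conjugate, the $(0,1)$-valued bracket $[Z_i,Z_j](o)$ and the terms $g(\nabla_{\overline{i}}\nabla_{\overline{j}}Z_k,Z_l)(o)$ by combining Lemma \ref{LVcor}(a) with the vanishing of $g$ on pairs of $(1,0)$-vectors. All four reductions, including the only nontrivial one for $R_{\overline{i}\overline{j}kl}$, are carried out correctly.
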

\subsection{Proof of formula \eqref{s-s*}}
The aim of this section is to give an alternative proof of formula \eqref{s-s*} without use the
Weitzenb\"ock decomposition:

\begin{proof}[Proof of formula  \eqref{s-s*}] Let  $(M,g,J,\omega)$ be an almost K\"ahler manifold. First of all we recall the definition of the $*$-Ricci tensor and the
$*$-scalar curvature
\noindent
$$
r_*(X,Y):=\sum_{i=1}^{2n}R(JX,JX_i,X_i,Y)\,,\quad s_*:=\sum_{i=1}^{2n}r_*(X_i,X_i)\,,
$$
where $\{X_1,\dots,X_{2n}\}$ is an arbitrary orthonormal frame on $M$.
It is easy to see that in complex coordinates the scalar curvature
and the $*$-scalar curvature write as
$$
\begin{aligned}
s=2\sum_{i,j=1}^n \{R_{i\overline{j}j\overline{i}}-R_{ij\overline{i}\overline{j}}\}\,,\quad
&s_*=2\sum_{i,j=1}^n \{R_{i\overline{j}j\overline{i}}+R_{ij\overline{i}\overline{j}}\}\,,
\end{aligned}
$$
being $\{Z_1,\dots,Z_n\}$ an arbitrary unitary $(1,0)$-frame on $M$.
In particular
$$
s_*-s=4\sum_{i,j=1}^n R_{ij\overline{i}\overline{j}}
$$
and formula \eqref{s-s*} can be rewritten as
$$
\sum_{i,j=1}^n R_{ij\overline{i}\overline{j}}=\frac14\|\nabla\omega\|^2\,.
$$
Fix an arbitrary point $o$ of $M$ and let $\{Z_1,\dots,Z_n\}$
be a g.n.h.f. around $o$. Since $\nabla_{i}Z_j(o)\in T^{0,1}_oM$,
then $N_{ij}(o)=-4[Z_i,Z_j](o)$; hence formula \eqref{fund2} reads
at $o$ as
$$
g([Z_i,Z_j],Z_l)(o)=-g(\nabla_{l}Z_i,Z_j)(o)\,.
$$
Since $\{Z_1,\dots,Z_n\}$ is an unitary frame we have
$$
[Z_i,Z_j](o)=-\sum_{l=1}^n\Gamma_{li}^{\ov{j}}(o)\,Z_{\overline{l}}(o)\,,
$$
where $\Gamma_{li}^{\ov{j}}:=g(\nabla_{l}Z_i,Z_j)$. Furthermore we have
$$
\begin{aligned}
R_{ij\overline{i}\overline{j}}(o)&=-g(\nabla_{[Z_i,Z_j]}Z_{\overline{i}},Z_{\overline{j}})(o)=\sum_{l=1}^n\Gamma_{li}^{\ov{j}}g(\nabla_{
\overline{l}}Z_{\overline{i}},Z_{\overline{j}})(o)\\
&=\sum_{l=1}^n\Gamma_{li}^{\ov{j}}(o)\Gamma_{\overline{l}\overline{i}}^{j}(o)
=\sum_{l=1}^n \vert \Gamma_{li}^{\ov{j}}\vert^2(o)\,.
\end{aligned}
$$
Hence
$$
\sum_{i,j=1}^nR_{ij\overline{i}\overline{j}}(o)=\sum_{l,i,j=1}^n\vert \Gamma_{li}^{\ov{j}}\vert^2(o)
$$
and the claim follows since
$
(\nabla_Z \omega)(X,Y)=\frac12g(N(X,Y),JZ)\,.
$
\end{proof}
Condition \eqref{s-s*} is related to the subspace $\mathcal{W}_4$ described in \cite[pag. 372]{Trix}
(see also \cite{Falcitelli} where $\mathcal{W}_4= {\mathcal{C}}_4$).
Indeed, by using Lemma 4.5 at page 371 in \cite{Trix} it is easy to see that the projection
$R^{\mathcal{W}_4}$ of $R$ to $\mathcal{W}_4$ is given by
\[ R^{\mathcal{W}_4} =\frac{(s - s_*)}{16n(n-1)} =\frac{1}{4n(n-1)}\sum_{i,j=1}^n R_{ij\overline{i}\overline{j}}=\frac{
1}{16n(n-1)}
\|\nabla\omega\|^2
 \,.
\]
\section{The first Bianchi identity for the Hermitian curvature}
\label{secR1}
\noindent In this section we are going to prove Theorem \ref{main} and its Corollary \ref{4Apost}.\\

Let $\widetilde{\nabla}$ be the canonical connection associated to a quasi K\"ahler structure
$(g,J,\omega)$ on a $2n$-dimensional manifold $M$. We have
\begin{lemma}\label{Z1Z2}
Let $Z_1,Z_2$ be two arbitrary $(1,0)$-vector fields on $M$. Then
$$
\wn_{Z_1}Z_2\in\Gamma(T^{1,0}M)\,,\quad \wn_{\overline{Z}_1}Z_2=\nabla_{\overline{Z}_1}Z_2\in\Gamma(T^{1,0}M)\,.
$$
\end{lemma}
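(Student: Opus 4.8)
The plan is to compute $\wn_{Z_1}Z_2$ and $\wn_{\ov{Z}_1}Z_2$ directly from the explicit formula $\wn=\nabla-\tfrac12 J\nabla J$ valid in the quasi K\"ahler case, and then extract the type information using Lemma \ref{LVcor}. Recall that since $J$ preserves $g$ and $\nabla g=0$, the operator $J\nabla J$ is well defined on vector fields, and for a $(1,0)$-field $W$ we have $JW=iW$ while for a $(0,1)$-field $JW=-iW$; this is the elementary bookkeeping that will convert each term into its type components.

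First I would handle the easy second identity. Since $\omega$ is quasi K\"ahler, part (a) of Lemma \ref{LVcor} gives $\nabla_{\ov{Z}_1}Z_2\in\Gamma(T^{1,0}M)$. I then compute $(J\nabla J)_{\ov{Z}_1}Z_2=J\nabla_{\ov{Z}_1}(JZ_2)-J(J\nabla_{\ov{Z}_1}Z_2)$, using $\nabla J = \nabla\circ J - J\circ\nabla$. Because $Z_2$ is of type $(1,0)$ we have $JZ_2=iZ_2$, so the first term is $iJ\nabla_{\ov{Z}_1}Z_2$, and since $\nabla_{\ov{Z}_1}Z_2$ is again of type $(1,0)$ the second term is $-J(J\nabla_{\ov{Z}_1}Z_2)=\nabla_{\ov{Z}_1}Z_2$; applying $J$ once more to $iJ\nabla_{\ov{Z}_1}Z_2=i\cdot i\nabla_{\ov{Z}_1}Z_2$ gives cancellation. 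A short check shows $(J\nabla J)_{\ov{Z}_1}Z_2=0$, whence $\wn_{\ov{Z}_1}Z_2=\nabla_{\ov{Z}_1}Z_2$, and this lies in $\Gamma(T^{1,0}M)$ by Lemma \ref{LVcor}(a). This proves the second claim.

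For the first identity, the key point is that the canonical connection preserves $J$, i.e. $\wn J=0$, so $\wn_{Z_1}(JZ_2)=J\wn_{Z_1}Z_2$. Since $Z_2$ is of type $(1,0)$, $JZ_2=iZ_2$, giving $\wn_{Z_1}(iZ_2)=J\wn_{Z_1}Z_2$, i.e. $i\wn_{Z_1}Z_2=J\wn_{Z_1}Z_2$. This says precisely that $\wn_{Z_1}Z_2$ is an eigenvector of $J$ for the eigenvalue $i$, which is the statement $\wn_{Z_1}Z_2\in\Gamma(T^{1,0}M)$. In fact this argument is uniform: for \emph{any} $(1,0)$-field $W$ and any vector field $X$, the relation $\wn J=0$ forces $\wn_X W\in\Gamma(T^{1,0}M)$, so the $(1,0)$-invariance under $\wn$ is automatic from the Hermitian property. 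I would present the first identity via this clean $\wn J=0$ argument rather than the explicit formula, reserving the explicit formula for the second identity where the extra information $\wn_{\ov{Z}_1}Z_2=\nabla_{\ov{Z}_1}Z_2$ is what must be established.

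The only mild obstacle is the second identity, and there the real content is entirely supplied by Lemma \ref{LVcor}(a): the identity $\wn_{\ov{Z}_1}Z_2=\nabla_{\ov{Z}_1}Z_2$ reduces to showing the correction term $-\tfrac12(J\nabla J)_{\ov{Z}_1}Z_2$ vanishes, and this vanishing is exactly the type computation sketched above combined with the fact that $\nabla_{\ov{Z}_1}Z_2$ is already of type $(1,0)$. So the lemma is essentially a direct unwinding of definitions once the quasi K\"ahler hypothesis is fed in through Lemma \ref{LVcor}; I expect no genuine difficulty, only careful tracking of the $i$ factors and of which terms are of pure type.
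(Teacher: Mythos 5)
Your proof is correct and follows essentially the same route as the paper, whose one-line proof simply invokes the definition $\wn=\nabla-\tfrac12 J\nabla J$ together with Lemma \ref{LVcor}(a); your type computation showing $(J\nabla J)_{\overline{Z}_1}Z_2=0$ is exactly the content being left implicit there. Deriving the first claim from $\wn J=0$ rather than from the explicit formula is a harmless (and slightly cleaner) variant, since that property is part of the paper's characterization of the canonical connection.
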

\begin{proof} It is enough to consider the definition of $\wn$ and to apply Lemma \ref{LVcor}.
\end{proof}
\noindent As a direct consequence of Lemma \ref{Z1Z2} we have the following
\begin{prop}\label{RTILDE1}
Let $\{Z_1,\dots,Z_n\}$ be an arbitrary $(1,0)$-frame on $M$ and let $\widetilde{R}$ be the Hermitian curvature tensor of $M$. Then
\begin{enumerate}
\item[1.]$\widetilde{R}_{ijk\overline{l}}=R_{ijk\overline{l}}\,;$
\item[2.]$\widetilde{R}_{\overline{i}\overline{j}kl}=\widetilde{R}_{ijkl}=\widetilde{R}_{i\overline{j}kl}=0\,.$
\end{enumerate}
\end{prop}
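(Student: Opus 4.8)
The plan is to reduce everything to two structural facts. First, the canonical connection $\wn$ is Hermitian, so $\wn J=0$ and $\wn$ preserves the bundle $T^{1,0}M$; this is exactly the content of Lemma~\ref{Z1Z2}, which gives $\wn_X Z\in\Gamma(T^{1,0}M)$ for every $Z\in\Gamma(T^{1,0}M)$ and every (possibly complex) direction $X$. Second, since $g$ is $J$-Hermitian, its complex-bilinear extension satisfies $g(U,V)=0$ whenever $U,V$ are both of type $(1,0)$ (or both of type $(0,1)$): from $g(JU,JV)=g(U,V)$ and $JU=iU$, $JV=iV$ one gets $-g(U,V)=g(U,V)$. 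Both items of the proposition will follow from these observations.

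For item~$2$ I would argue directly on the curvature operator. Fix $(1,0)$-fields $Z_k,Z_l$ and arbitrary complex directions $X,Y$. By the first fact each of $\wn_X\wn_Y Z_k$, $\wn_Y\wn_X Z_k$ and $\wn_{[X,Y]}Z_k$ lies in $\Gamma(T^{1,0}M)$, hence so does $\widetilde{R}(X,Y)Z_k=\wn_X\wn_YZ_k-\wn_Y\wn_XZ_k-\wn_{[X,Y]}Z_k$. The three components $\widetilde{R}_{\overline i\,\overline j kl}$, $\widetilde{R}_{ijkl}$, $\widetilde{R}_{i\overline j kl}$ all have their last two arguments of type $(1,0)$, so pairing the $(1,0)$-vector $\widetilde{R}(X,Y)Z_k$ against $Z_l\in\Gamma(T^{1,0}M)$ gives zero by the second fact. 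This disposes of item~$2$ at once.

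For item~$1$ the point is that on arguments of type $(1,0),(1,0),(1,0),(0,1)$ the correction coming from the torsion of $\wn$ is invisible to $g$. Writing $\wn=\nabla+S$ with $S_XY=-\tfrac12 J(\nabla_XJ)Y$, a short computation from $\wn=\nabla-\tfrac12 J\nabla J$ shows that for $Z\in\Gamma(T^{1,0}M)$ one has $S_XZ=-(\nabla_XZ)^{0,1}$; in particular $S$ interchanges the types $(1,0)$ and $(0,1)$, while Lemma~\ref{LVcor}(a) (and its conjugate) gives $S_{\overline Z_1}Z_2=0$ and $S_{Z_1}\overline Z_2=0$. Since $\nabla$ is torsion-free, the two connections obey
\[
\widetilde{R}(X,Y)Z=R(X,Y)Z+(\nabla_X S)_Y Z-(\nabla_Y S)_X Z+[S_X,S_Y]Z\,,
\]
and I would take $X=Z_i$, $Y=Z_j$, $Z=Z_k$ of type $(1,0)$ and pair with $\overline Z_l$.

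The bracket term dies immediately: $S_{Z_j}Z_k$ is of type $(0,1)$ and $S_{Z_i}$ annihilates $T^{0,1}$, so $[S_{Z_i},S_{Z_j}]Z_k=0$. The only real obstacle is the bookkeeping showing that $g\big((\nabla_{Z_i}S)_{Z_j}Z_k,\overline Z_l\big)$ vanishes (and likewise with $i,j$ swapped). Expanding $(\nabla_{Z_i}S)_{Z_j}Z_k=\nabla_{Z_i}(S_{Z_j}Z_k)-S_{\nabla_{Z_i}Z_j}Z_k-S_{Z_j}\nabla_{Z_i}Z_k$, the last two summands land in $T^{0,1}$ (using $S_{(0,1)}Z_k=0$ and that $S_{Z_j}$ maps into $T^{0,1}$), hence pair to zero with $\overline Z_l\in T^{0,1}$ by the second fact; for the first summand I would use metric compatibility of $\nabla$ together with $g(S_{Z_j}Z_k,\overline Z_l)\equiv0$ (two $(0,1)$-vectors) and $\nabla_{Z_i}\overline Z_l\in\Gamma(T^{0,1}M)$ to get $g(\nabla_{Z_i}(S_{Z_j}Z_k),\overline Z_l)=0$. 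Thus all correction terms vanish and $\widetilde{R}_{ijk\overline l}=R_{ijk\overline l}$. Alternatively, one may verify item~$1$ pointwise in a generalized normal holomorphic frame, where properties (a),(b) make the $S_{\nabla Z}$-terms vanish at the base point directly.
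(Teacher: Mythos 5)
Your proposal is correct. The paper itself offers no written proof: Proposition \ref{RTILDE1} is stated as ``a direct consequence of Lemma \ref{Z1Z2}'', and your argument for item~2 is exactly that intended one-liner ($\wn$ preserves $T^{1,0}M$ in all complex directions, so $\widetilde{R}(X,Y)Z_k$ is of type $(1,0)$ and pairs to zero with $Z_l$). For item~1 you go further than the paper does anywhere: you make the implicit claim rigorous via the standard comparison formula for the curvatures of two connections differing by $S=\wn-\nabla$, with $S_XZ=-(\nabla_XZ)^{0,1}$ on $(1,0)$-fields. This is a legitimate and complete route; the checks that $[S_{Z_i},S_{Z_j}]Z_k=0$ and that the $(\nabla S)$-terms pair to zero with $\overline{Z}_l$ are all sound. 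It is worth emphasizing (as your proof correctly does, even if only in passing) that item~1 is not purely formal: the vanishing of $S_{\overline{Z}_1}Z_2$ and $S_{Z_1}\overline{Z}_2$, and the fact that $\nabla_{Z_i}\overline{Z}_l\in\Gamma(T^{0,1}M)$, all rest on the quasi K\"ahler hypothesis through Lemma \ref{LVcor}(a); without it the identity $\widetilde{R}_{ijk\overline{l}}=R_{ijk\overline{l}}$ would fail. Your closing remark that one could instead verify item~1 at a point in a g.n.h.f.\ is consistent with what the paper actually computes later, in the proof of Proposition \ref{RTILDE}.
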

\begin{lemma}\label{@o}
Let $o$ be an arbitrary point
of $M$ and let $\{Z_1,\dots,Z_n\}$ be a g.n.h.f. around $o$. Then
$$
\widetilde{\nabla}_{i}Z_j(o)=0\,,\quad  \widetilde{\nabla}_{\overline{i}}Z_j(o)=0\,,\quad \mbox{for any }i,j=1,\dots, n\,,
$$
i.e. the canonical connection acts on generalized normal holomorphic frames in quasi K\"ahler manifolds as the Levi-Civita connection
acts on normal holomorphic frames in K\"ahler manifolds.
\end{lemma}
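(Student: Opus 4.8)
The plan is to split the statement into its two halves and treat each using the explicit formula $\widetilde{\nabla}=\nabla-\tfrac12 J\nabla J$ together with the defining properties (a) and (b) of a g.n.h.f.; both reduce to short pointwise computations at $o$.

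For the antiholomorphic derivatives $\widetilde{\nabla}_{\overline{i}}Z_j(o)$ I would simply invoke Lemma \ref{Z1Z2}, which already gives $\widetilde{\nabla}_{\overline{i}}Z_j=\nabla_{\overline{i}}Z_j$ on the whole frame. Property (a) of the frame says $\nabla_i Z_{\overline{j}}(o)=0$, and since the Levi-Civita connection is real, conjugating this identity yields $\nabla_{\overline{i}}Z_j(o)=0$. Hence $\widetilde{\nabla}_{\overline{i}}Z_j(o)=0$ with no further work.

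The holomorphic derivatives $\widetilde{\nabla}_i Z_j(o)$ are where the correction term $-\tfrac12 J(\nabla J)$ does the essential job. Setting $V:=\nabla_{Z_i}Z_j(o)$, property (b) tells us $V$ is of type $(0,1)$, so $JV=-iV$. Expanding $(\nabla_{Z_i}J)Z_j=\nabla_{Z_i}(JZ_j)-J\nabla_{Z_i}Z_j$ and using $JZ_j=iZ_j$, at $o$ I obtain $(\nabla_{Z_i}J)Z_j(o)=iV-JV=2iV$, whence $\tfrac12 J(\nabla_{Z_i}J)Z_j(o)=iJV=V$. Substituting into the formula for $\widetilde{\nabla}$ gives $\widetilde{\nabla}_i Z_j(o)=V-V=0$.

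I do not expect a genuine obstacle here: once the type information in (a) and (b) is combined with the formula $\widetilde{\nabla}=\nabla-\tfrac12 J\nabla J$ and Lemma \ref{Z1Z2}, everything collapses. The only point requiring a little care is bookkeeping the factors of $i$ produced by $J$ acting on $(1,0)$- versus $(0,1)$-vectors, together with the observation that the $(0,1)$-type of $\nabla_{Z_i}Z_j(o)$ is precisely what makes the correction term cancel the Levi-Civita term.
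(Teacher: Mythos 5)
Your proposal is correct and follows essentially the same route as the paper: both arguments plug the type information from properties (a) and (b) of the g.n.h.f.\ into the explicit formula $\widetilde{\nabla}=\nabla-\tfrac12 J\nabla J$ and let the factors of $\operatorname{i}$ cancel at $o$. The only cosmetic difference is that you route the antiholomorphic half through Lemma \ref{Z1Z2}, whereas the paper redoes the one-line computation directly; the content is identical.
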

\begin{proof}
Let $\{Z_1,\dots,Z_n\}$ be a g.n.h.f. around $o$. Since $\nabla_iZ_j(o)\in T^{0,1}_oM$, then we have
$$
\begin{aligned}
\wn_{i}Z_j(o)=&\frac12\{\nabla_i Z_j-J\nabla_i JZ_j\}(o)=\frac12\nabla_i Z_j(o)-\operatorname{i}\frac12J\nabla_i Z_j(o)\\
=&\frac12\nabla_i Z_j(o)-\frac12\nabla_i Z_j(o)=0\,.
\end{aligned}
$$
Moreover since $\nabla_{\overline{i}}Z_j(o)=0$, we have
\begin{equation*}
\wn_{\overline{i}}Z_j(o)=\frac12\{\nabla_{\overline{i}} Z_j-J\nabla_{\overline{i}} JZ_j\}(o)=\frac12\nabla_{\overline{i}} Z_j(o)
-\operatorname{i}\frac12J\nabla_{\overline{i}} Z_j(o)=0
\end{equation*}
and the claim follows.
\end{proof}
\noindent We have the following
\begin{prop}\label{RTILDE}
The components of the Hermitian curvature tensor
$\widetilde{R}$ with respect to a g.n.h.f. $\{Z_1,\dots,Z_n\}$
around a point $o$ write as
\begin{enumerate}
\item[1.]$\widetilde{R}_{i\overline{j}k\overline{l}}(o)=R_{i\overline{j}k\overline{l}}(o)-
g(\nabla_iZ_k,\nabla_{\overline{j}}Z_{\overline{l}})(o)\,;$
\item[2.]$\widetilde{R}_{ijk\overline{l}}(o)=R_{ijk\overline{l}}(o)\,;$
\item[3.]$\widetilde{R}_{\overline{i}\overline{j}kl}(o)=\widetilde{R}_{ijkl}(o)=\widetilde{R}_{i\overline{j}kl}(o)=0\,.$
\end{enumerate}
\end{prop}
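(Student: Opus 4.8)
The plan is to compute the components of $\widetilde{R}$ directly from its definition at the point $o$, exploiting the vanishing established in Lemma~\ref{@o}, namely that $\wn_i Z_j(o)=\wn_{\overline{i}}Z_j(o)=0$. This is exactly the feature that makes a g.n.h.f. behave, with respect to $\wn$, like a normal holomorphic frame with respect to the Levi-Civita connection in the K\"ahler setting, so the curvature formulas should simplify in the same way.

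First I would record that part~3 is immediate from Proposition~\ref{RTILDE1}(2), since those identities
$\widetilde{R}_{\overline{i}\overline{j}kl}=\widetilde{R}_{ijkl}=\widetilde{R}_{i\overline{j}kl}=0$
hold with respect to \emph{any} $(1,0)$-frame (they follow from $\wn_{Z_1}Z_2\in\Gamma(T^{1,0}M)$ and $\wn_{\overline{Z}_1}Z_2\in\Gamma(T^{1,0}M)$ via Lemma~\ref{Z1Z2}), hence in particular at $o$ with the g.n.h.f. Similarly part~2, $\widetilde{R}_{ijk\overline{l}}(o)=R_{ijk\overline{l}}(o)$, is just Proposition~\ref{RTILDE1}(1) evaluated at $o$, so no further work is needed there.

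The substantive step is part~1. I would write
$\widetilde{R}_{i\overline{j}k\overline{l}}(o)=g\bigl(\wn_i\wn_{\overline{j}}Z_k-\wn_{\overline{j}}\wn_i Z_k-\wn_{[Z_i,Z_{\overline{j}}]}Z_k,\,Z_{\overline{l}}\bigr)(o)$
and evaluate each term using $\wn_i Z_k(o)=\wn_{\overline{j}}Z_k(o)=0$. By Lemma~\ref{Z1Z2}, $\wn_{\overline{j}}Z_k=\nabla_{\overline{j}}Z_k$, so the second-order terms $\wn_i\wn_{\overline{j}}Z_k$ and $\wn_{\overline{j}}\wn_i Z_k$ at $o$ can be compared with their Levi-Civita analogues; because the first-order terms vanish at $o$, only the ``outer'' derivative of the connection coefficients survives, and the bracket term $\wn_{[Z_i,Z_{\overline{j}}]}Z_k(o)$ should be handled by expanding $[Z_i,Z_{\overline{j}}]$ in the frame and noting its action through the surviving connection coefficients. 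The same manipulation applied to $R_{i\overline{j}k\overline{l}}(o)$ (using the Levi-Civita version) produces the same leading term, and the discrepancy between $\wn=\nabla-\tfrac12 J\nabla J$ and $\nabla$ collects precisely into the correction $-g(\nabla_i Z_k,\nabla_{\overline{j}}Z_{\overline{l}})(o)$.

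The main obstacle is the careful bookkeeping of which $\nabla Z$ terms vanish at $o$ and which survive when one replaces $\wn$ by $\nabla-\tfrac12 J\nabla J$ and differentiates once more: the quadratic correction term $g(\nabla_i Z_k,\nabla_{\overline{j}}Z_{\overline{l}})$ arises from cross terms of the form $(\nabla\!J)(\nabla Z)$, and I must verify that all mixed contributions either vanish by the g.n.h.f.\ conditions (a)--(d) or combine into exactly this expression, with the correct sign and no leftover bracket contribution. Getting that sign and the factor right, rather than the overall strategy, is where the computation must be done with care.
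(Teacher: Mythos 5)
Your proposal is correct and follows essentially the same route as the paper: parts 2 and 3 are read off from Proposition~\ref{RTILDE1}, and part 1 is obtained by expanding the definition of $\widetilde{R}_{i\overline{j}k\overline{l}}$ at $o$, killing the bracket term and all first-order terms via Lemma~\ref{Z1Z2}, Lemma~\ref{@o} and the g.n.h.f.\ conditions, with the surviving quadratic term giving the correction $-g(\nabla_iZ_k,\nabla_{\overline{j}}Z_{\overline{l}})(o)$. The only step you leave implicit --- the sign bookkeeping --- is carried out in the paper by repeated use of metric compatibility of $\nabla$ and $\widetilde{\nabla}$ rather than by substituting $\widetilde{\nabla}=\nabla-\tfrac12 J\nabla J$ directly, but the two computations are equivalent.
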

\begin{proof}
The items $2$. and $3$. come from Proposition \ref{RTILDE1}.
The proof the first
identity can be obtained as follows: \\
By definition of $\widetilde{R}$ and the equation $[Z_i,Z_{\overline{j}}](o)=0$, we have
$$
\begin{aligned}
\widetilde{R}_{i\overline{j}k\overline{l}}(o)&=g(\wn_i\wn_{\overline{j}}Z_k-
\wn_{\overline{j}}\wn_iZ_k-\wn_{[Z_i,Z_{\overline{j}}]}Z_k,Z_{\overline{l}})(o)\\
&=g(\wn_i\wn_{\overline{j}}Z_k-\wn_{\overline{j}}\wn_iZ_k,Z_{\overline{l}})(o)\,.
\end{aligned}
$$
Applying Lemma \ref{Z1Z2} and Lemma \ref{@o}, we get
$$
\begin{aligned}
\widetilde{R}_{i\overline{j}k\overline{l}}(o)=&g(\wn_i\wn_{\overline{j}}Z_k-\wn_{\overline{j}}\wn_iZ_k,Z_{\overline{l}})(o)\\
=&g(\wn_i\nabla_{\overline{j}}Z_k,Z_{\overline{l}})(o)-g(\wn_{\overline{j}}\wn_iZ_k,Z_{\overline{l}})(o)\\
=&Z_ig(\nabla_{\overline{j}}Z_k,Z_{\overline{l}})(o)-g(\nabla_{\overline{j}}Z_k,\wn_iZ_{\overline{l}})(o)-Z_{\overline{j}}g(\wn_iZ_k,Z_{\overline{l}})(o)\\
&+g(\wn_iZ_k,\wn_{\overline{j}}Z_{\overline{l}})(o)\\
=&Z_ig(\nabla_{\overline{j}}Z_k,Z_{\overline{l}})(o)-Z_{\overline{j}}g(\wn_iZ_k,Z_{\overline{l}})(o)\,.
\end{aligned}
$$
Finally, taking into account Lemma \ref{LVcor} and that $\nabla$ and $\widetilde{\nabla}$ preserve $g$, we obtain
$$
\begin{aligned}
\widetilde{R}_{i\overline{j}k\overline{l}}(o)
=&Z_ig(\nabla_{\overline{j}}Z_k,Z_{\overline{l}})(o)-Z_{\overline{j}}g(\wn_iZ_k,Z_{\overline{l}})(o)\\
=&g(\nabla_i\nabla_{\overline{j}}Z_k,Z_{\overline{l}})(o)+g(\nabla_{\overline{j}}Z_k,\nabla_iZ_{\overline{l}})(o)\\
&-Z_{\overline{j}}Z_ig_{k\overline{l}}(o)+Z_{\overline{j}}g(Z_k,\wn_iZ_{\overline{l}})(o)\\
=&-Z_{\overline{j}}Z_ig_{k\overline{l}}(o)+Z_{\overline{j}}g(Z_k,\wn_iZ_{\overline{l}})(o)\\
=&-Z_{\overline{j}}g(\nabla_iZ_k,Z_{\overline{l}})(o)-Z_{\overline{j}}g(Z_k,\nabla_iZ_{\overline{l}})(o)\\
&-Z_{\overline{j}}g(Z_k,\nabla_iZ_{\overline{l}})(o)\\
=&-g(\nabla_{\overline{j}}\nabla_iZ_k,Z_{\overline{l}})(o)-g(\nabla_iZ_k,\nabla_{\overline{j}}Z_{\overline{l}})(o)\\
&-g(\nabla_{\overline{j}}Z_k,\nabla_iZ_{\overline{l}})(o)-g(Z_k,\nabla_{\overline{j}}\nabla_iZ_{\overline{l}})(o)\\
=&R_{i\overline{j}k\overline{l}}(o)-g(\nabla_iZ_k,\nabla_{\overline{j}}Z_{\overline{l}})(o)\,,
\end{aligned}
$$
i.e.
$$
\widetilde{R}_{i\overline{j}k\overline{l}}(o)=R_{i\overline{j}k\overline{l}}(o)-g(\nabla_iZ_k,\nabla_{\overline{j}}Z_{\overline{l}})(o)\,,
$$
and the claim follows.
\end{proof}

\noindent Now we are ready to prove Theorem $\ref{main}$:

\begin{proof}[Proof of Theorem $\ref{main}$]
Let $o\in M$ be an arbitrary point and let $\{Z_1,\dots,Z_n\}$ be a g.n.h.f. around $o$. By Proposition \ref{RTILDE}, we have
$$
\underset{i,j,k}{\mathfrak{S}}\widetilde{R}_{ijkl}(o)=\underset{i,j,k}{\mathfrak{S}}\widetilde{R}_{ijk\overline{l}}(o)=0\,.
$$
Moreover
\begin{equation}
\underset{i,\ov{j},k}{\mathfrak{S}}\widetilde{R}_{i\overline{j}kl}(o)=R_{ki\overline{j}l}(o)\,.
\end{equation}
Furthermore
$$
\begin{aligned}
\underset{i,\ov{j},k}{\mathfrak{S}}\widetilde{R}_{i\overline{j}k\overline{l}}(o)=&
\widetilde{R}_{i\overline{j}k\overline{l}}(o)+\widetilde{R}_{ki\overline{j}\overline{l}}(o)+\widetilde{R}_{\overline{j}ki\overline{l}}(o)\\
=&\widetilde{R}_{i\overline{j}k\overline{l}}(o)+\widetilde{R}_{\overline{j}ki\overline{l}}(o)\\
=&R_{i\overline{j}k\overline{l}}(o)+R_{\overline{j}ki\overline{l}}(o)-
g(\nabla_iZ_k,\nabla_{\overline{j}}Z_{\overline{l}})(o)+g(\nabla_kZ_i,\nabla_{\overline{j}}Z_{\overline{l}})(o)\\
=&-R_{ki\overline{j}\overline{l}}(o)-g([Z_i,Z_k],\nabla_{\overline{j}}Z_{\overline{l}})(o)\,,
\end{aligned}
$$
i.e.
\begin{equation}
\underset{i,\ov{j},k}{\mathfrak{S}}\widetilde{R}_{i\overline{j}k\overline{l}}(o)=R_{ik\overline{j}\overline{l}}(o)-g([Z_i,Z_k],\nabla_{\overline{j}}Z_{\overline{l}})(o)\,.
\end{equation}
Hence the Hermitian curvature $\widetilde{R}$ satisfies the first Bianchi identity at $o$ if and only if the following equations hold:
\begin{eqnarray}
&&\label{prima} R_{ki\overline{j}l}(o)=0\,;\\
&&\label{seconda}R_{ik\overline{j}\overline{l}}(o)-g([Z_i,Z_k],\nabla_{\overline{j}}Z_{\overline{l}})(o)=0\,.
\end{eqnarray}
Equation \eqref{prima} is  the third Gray condition, while, in view of Lemma
\ref{nablaN}, equation \eqref{seconda} is satisfied if and only if
$$
R(Z_1,Z_2,\overline{Z}_3,\overline{Z}_4)=\frac14g((\nabla_{\overline{Z}_3}N)(Z_1,Z_2),\overline{Z}_4)
$$
for every $Z_1,Z_2,Z_3,Z_4\in\Gamma(T^{1,0}M)$.
\end{proof}
\noindent Now we can prove Corollary $\ref{4Apost}$.
\begin{proof}[Proof of Corollary $\ref{4Apost}$] Assume that $(M,g,J,\omega)$ is an almost K\"ahler manifold and let $\widetilde{R}$ be the Hermitian
curvature of $(g,J)$. Fix an arbitrary point $o$ of $M$, consider a g.n.h.f. $\{Z_1,\dots,Z_n\}$
around $o$ and assume that $\widetilde{R}$ satisfies the
first Bianchi identity. Then, in view of Theorem \ref{main}, we have
$$
0=R_{ik\ov{j}\ov{l}}(o)-g([Z_i,Z_k],\nabla_{\ov{j}}Z_{\ov{l}})(o)=-g(\nabla_{[Z_i,Z_k]}Z_{\ov{j}},Z_{\ov{l}})(o)-g([Z_i,Z_k],\nabla_{\ov{j}}Z_{\ov{l}})(o)\,,
$$
i.e.
\begin{equation}\label{e}
g(\nabla_{[Z_i,Z_k]}Z_{\ov{j}},Z_{\ov{l}})(o)=-g([Z_i,Z_k],\nabla_{\ov{j}}Z_{\ov{l}})(o)\,.
\end{equation}
In particular
$$
g([Z_i,Z_k],\nabla_{\ov{j}}Z_{\ov{l}})(o)=-g([Z_i,Z_k],\nabla_{\ov{l}}Z_{\ov{j}})(o)\,,
$$
i.e. $g([Z_i,Z_k],\nabla_{\ov{j}}Z_{\ov{l}})(o)$ is skew-symmetric with respect to the indexes $\ov{j},\ov{l}$.
In view of formula \eqref{fund2}, we have
\begin{equation*}
\begin{aligned}
g(\nabla_{[Z_i,Z_k]}Z_{\ov{j}},Z_{\ov{l}})(o)=&\frac14g(N_{\ov{j}\ov{l}},[Z_i,Z_k])(o)=-g([Z_{\ov{j}},Z_{\ov{l}}],[Z_i,Z_k])(o)\\
                                             =&-2g([Z_i,Z_k],\nabla_{\ov{j}}Z_{\ov{l}})(o)
\end{aligned}
\end{equation*}
Hence equation \eqref{e} implies
$$
g([Z_i,Z_k],\nabla_{\ov{j}}Z_{\ov{l}})(o)=0
$$
which forces $J$ to be integrable.
\end{proof}

\section{The condition $\widetilde{R}=0$ in quasi K\"ahler manifolds}\label{secR2}
\noindent In this section we investigate the case $\widetilde{R}=0$. We start by considering the following preliminar
\begin{lemma}\label{preliminare}
Let $(M,g,J,\omega)$ be a quasi K\"ahler manifold. Then the following are equivalent:
\begin{enumerate}
\item[1.] the curvature tensor of the canonical connection associated to $(g,J)$ vanishes;
\vspace{0.1cm}
\item[2.]every $o\in M$ admits an open neighborhood $U$ and a complex unitary $(1,0)$-frame $\{Z_1,\dots,Z_n\}$ on $U$ such that
$$
\nabla_{i}Z_j\in\Gamma(T^{0,1}U)\,,\quad \nabla_{\overline{i}}Z_j=0\,,\quad i,j=1,\dots, n\,.
$$
\end{enumerate}
\end{lemma}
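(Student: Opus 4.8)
The plan is to recognize the two conditions in item $2.$ as precisely the statement that $\{Z_1,\dots,Z_n\}$ is a $\wn$-parallel unitary frame of $T^{1,0}U$, and then to appeal to the classical fact that a connection is flat if and only if it admits local parallel frames. The only genuinely non-formal input will be this integrability statement for flat connections; everything else is a routine matching of $\wn$-parallelism with the displayed conditions on $\nabla$.

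First I would translate item $2.$ into conditions on $\wn$. Writing $\wn=\nabla-\frac12 J\nabla J$ as in the proof of Lemma \ref{@o}, one has $\wn_X Y=\frac12(\nabla_X Y-J\nabla_X(JY))$, and since $JZ_j=\operatorname{i}Z_j$ a direct computation gives, for \emph{every} vector field $X$,
$$
\wn_X Z_j=\tfrac12\bigl(\nabla_X Z_j-\operatorname{i}J\nabla_X Z_j\bigr)=(\nabla_X Z_j)^{1,0}\,,
$$
the $(1,0)$-component of $\nabla_X Z_j$. Taking $X=Z_i$, the first condition $\nabla_i Z_j\in\Gamma(T^{0,1}U)$ is thus equivalent to $\wn_i Z_j=0$. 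Taking $X=Z_{\overline i}$ and using Lemma \ref{Z1Z2} (equivalently \eqref{fund1}, which gives $\nabla_{\overline i}Z_j\in\Gamma(T^{1,0}U)$), one has $\wn_{\overline i}Z_j=\nabla_{\overline i}Z_j$, so the second condition $\nabla_{\overline i}Z_j=0$ is equivalent to $\wn_{\overline i}Z_j=0$. Since $\{Z_i,Z_{\overline i}\}$ spans $TU\otimes\C$, item $2.$ holds exactly when there is a local unitary $(1,0)$-frame with $\wn Z_j=0$ for all $j$.

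For the implication $2.\Rightarrow 1.$ I would simply feed such a parallel frame into the definition of $\widetilde R$: for all $X,Y$ and every $j$,
$$
\widetilde R(X,Y)Z_j=\wn_X\wn_Y Z_j-\wn_Y\wn_X Z_j-\wn_{[X,Y]}Z_j=0\,.
$$
Because $\{Z_j\}$ is a frame of $T^{1,0}U$ (and $\{Z_{\overline j}\}$ one of $T^{0,1}U$, on which $\widetilde R$ acts by conjugation since $\wn$ is real), this forces $\widetilde R\equiv 0$ on $U$; as $o$ is arbitrary, $\widetilde R$ vanishes on $M$.

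For the converse $1.\Rightarrow 2.$, I would use that $\wn J=0$ makes $\wn$ restrict to a connection on the complex vector bundle $T^{1,0}M$, whose curvature is the restriction of $\widetilde R$. If $\widetilde R=0$ this connection is flat, so by the standard Frobenius-type theorem each $o$ has a neighborhood $U$ carrying a $\wn$-parallel frame of $T^{1,0}M$. Since $\wn g=0$, the Gram matrix of this frame is $\wn$-parallel, hence locally constant, and a constant linear change of basis turns it into a unitary parallel frame; by the translation in the second paragraph this frame satisfies both conditions in item $2.$ The main (and essentially only) obstacle is thus the local existence of $\wn$-parallel frames for the flat connection induced on $T^{1,0}M$, after which the unitarity and the $(1,0)$-type follow from $\wn g=0$ and $\wn J=0$ respectively.
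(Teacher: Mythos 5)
Your proposal is correct and follows essentially the same route as the paper: both identify condition 2 with the existence of a local $\wn$-parallel unitary $(1,0)$-frame (using the quasi K\"ahler hypothesis, via \eqref{fund1}, to upgrade $\wn_{\overline{i}}Z_j=0$ to $\nabla_{\overline{i}}Z_j=0$) and then invoke the equivalence of flatness with the local existence of parallel frames. You merely spell out in more detail the standard facts (local parallel frames for a flat connection, constancy of the Gram matrix from $\wn g=0$) that the paper takes for granted.
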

\begin{proof}
The condition $\widetilde{R}=0$ is equivalent to require that every point $o$ of $M$ admits an open neighborhood $U$ equipped with
a complex unitary $(1,0)$-frame $\{Z_1,\dots,Z_n\}$ such that
\begin{equation}\label{derivative}
\wn_{i}Z_j=0\,,\quad \wn_{\overline{i}}Z_j=0\,,\quad i,j=1,\dots, n\,.
\end{equation}
Since
$$
\wn_{i}Z_j=0=\frac12\nabla_iZ_j-\frac12J\nabla_{i}JZ_j=\frac12\nabla_iZ_j+\frac12\operatorname{i}J\nabla_iZ_j\,;
$$
and
$$
\wn_{\overline{i}}Z_j=0=\frac12\nabla_{\overline{i}}Z_j-\frac12J\nabla_{{\overline{i}}}JZ_j\frac12\nabla_{\overline{i}}Z_j+\frac12
\operatorname{i}J\nabla_{\overline{i}}Z_j\,,
$$
then \eqref{derivative} is equivalent to require that $\nabla_{i}Z_j\,,\nabla_{\overline{i}}Z_j\in\Gamma(T^{0,1}U)$ for every $i,j=1,\dots, n$. By
the assumption on $M$ to be quasi K\"ahler we have $\wn_{\overline{i}}Z_j=0$.
\end{proof}
\begin{rem}\label{remdeba}\emph{
Note that the second item of the previews Lemma in particular
 implies that if $g$ is an $\widetilde{R}$-flat quasi K\"ahler metric, then we can always find a local
unitary $(1,0)$-coframe $\{\zeta_1,\dots,\zeta_n\}$ such that
$$
\partial\zeta_i=\ovp\zeta_i=0\,,\quad i=1,\dots,n\,.
$$}
\end{rem}
We recall that
a $4$-dimensional quasi K\"ahler manifold is always almost K\"ahler. Hence, in view of Theorem \ref{main}, if a $4$-dimensional
quasi K\"ahler manifold has $\widetilde{R}=0$ then it is K\"ahler. In greater dimension things work differently:
\begin{theorem}
There exists a
quasi K\"ahler structure $(g_0,J_0,\omega_0)$ on the Iwasawa manifold with the following properties:
\begin{enumerate}
\item[1.] the Hermitian curvature of $(g_0,J_0)$ vanishes;
\vspace{0.1cm}
\item[2.] the Riemann curvature of $g_0$ satisfies the second Gray identity $({\rm G_{2})}$.
\end{enumerate}
\end{theorem}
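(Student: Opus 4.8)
The plan is to build the example from left-invariant data. Recall that the Iwasawa manifold is a compact quotient $\Gamma\backslash G$ of the complex Heisenberg group $G$, so every left-invariant tensor descends to the quotient and the whole construction reduces to linear algebra on the Lie algebra. I would fix a global left-invariant unitary $(1,0)$-coframe $\{\zeta_1,\zeta_2,\zeta_3\}$ — which simultaneously determines the almost complex structure $J_0$ (declaring the $\zeta_i$ to be of type $(1,0)$) and the metric $g_0=\sum_i(\zeta_i\otimes\overline{\zeta}_i+\overline{\zeta}_i\otimes\zeta_i)$, with fundamental form $\omega_0=i\sum_i\zeta_i\wedge\overline{\zeta}_i$ — and impose the structure equations
\[
d\zeta_1=d\zeta_2=0,\qquad d\zeta_3=\overline{\zeta}_1\wedge\overline{\zeta}_2.
\]
Since $d\zeta_3$ has nonzero $(0,2)$-part, $J_0$ is non-integrable. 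Passing to a real frame and comparing structure constants shows that the underlying real Lie algebra is (isomorphic to) the Iwasawa one, so this data really lives on the Iwasawa manifold. A direct computation from the structure equations gives $d\omega_0=i\bigl(\overline{\zeta}_1\wedge\overline{\zeta}_2\wedge\overline{\zeta}_3-\zeta_1\wedge\zeta_2\wedge\zeta_3\bigr)$, which is purely of type $(0,3)+(3,0)$; hence $(d\omega_0)^{1,2}=0$, so $(g_0,J_0,\omega_0)$ is quasi K\"ahler, while $d\omega_0\neq0$ shows it is strictly quasi K\"ahler, as needed to make the example nontrivial.

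To obtain $\widetilde{R}=0$ I would dualize the structure equations to the brackets of the frame $\{Z_1,Z_2,Z_3\}$: the only nonzero ones are $[Z_1,Z_2]=-\overline{Z}_3$ and its conjugate $[\overline{Z}_1,\overline{Z}_2]=-Z_3$, all mixed brackets vanishing. Feeding these into the Koszul formula (all metric coefficients being constant) yields $\nabla_{Z_i}Z_j\in\Gamma(T^{0,1}M)$ and $\nabla_{\overline{Z}_i}Z_j=0$ for all $i,j$. This is exactly condition $2$ of Lemma \ref{preliminare} — equivalently the coframe condition $\partial\zeta_i=\overline{\partial}\zeta_i=0$ of Remark \ref{remdeba} — so the Hermitian curvature vanishes.

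Finally I would check that the Riemann curvature satisfies $(\mathrm{G}_2)$. From $\nabla_{\overline{Z}_i}Z_j=0$ and the reality of the Levi-Civita connection one gets $\nabla_{Z_i}\overline{Z}_j=\overline{\nabla_{\overline{Z}_i}Z_j}=0$; since each $\nabla_{Z_j}Z_k$ lies in $T^{0,1}M$ and each $[Z_i,Z_j]$ lies in $T^{0,1}M$, every term of $R(Z_i,Z_j)Z_k$ drops out, whence $R(Z_i,Z_j)Z_k=0$ and $R_{ijkl}=0$. For the remaining components, $R(\overline{Z}_i,Z_j)Z_k=\nabla_{\overline{Z}_i}(\nabla_{Z_j}Z_k)$ (the other two terms vanish because $\nabla_{\overline{Z}_i}Z_k=0$ and the mixed brackets are zero), and $\nabla_{\overline{Z}_i}$ applied to a $(0,1)$-vector lands in $T^{1,0}M$; pairing a $(1,0)$-vector with $Z_l$ under the Hermitian metric gives zero, so $R_{\overline{i}jkl}=0$. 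Together these are precisely $(\mathrm{G}_2)$.

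I expect the main obstacle to be the initial creative step: singling out the almost complex structure $J_0$ whose $(1,0)$-coframe satisfies $\partial\zeta_i=\overline{\partial}\zeta_i=0$, together with the bookkeeping needed to confirm that the real Lie algebra carrying it is genuinely the Iwasawa algebra rather than some other six-dimensional nilpotent one. Once $J_0$ is correctly chosen, the appeal to Lemma \ref{preliminare} makes $\widetilde{R}=0$ automatic and the curvature computations for $(\mathrm{G}_2)$ are short and essentially forced.
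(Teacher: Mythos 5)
Your proposal is correct and follows essentially the same route as the paper: you construct the same non-integrable left-invariant almost complex structure on the Iwasawa manifold (your structure equations $d\zeta_1=d\zeta_2=0$, $d\zeta_3=\overline{\zeta}_1\wedge\overline{\zeta}_2$ are, up to rescaling, dual to the paper's brackets $[Z_1,Z_2]=2Z_{\overline{3}}$, $[Z_{\overline{1}},Z_{\overline{2}}]=2Z_3$), verify $\nabla_{\overline{i}}Z_j=0$ and $\nabla_iZ_j\in\Gamma(T^{0,1}M)$, and invoke Lemma \ref{preliminare} for $\widetilde{R}=0$. The only difference is presentational: you work with the coframe rather than the real frame, and you spell out the $(\mathrm{G}_2)$ verification that the paper dismisses as ``a straightforward computation''; your argument for it is correct.
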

\begin{proof}
Let $G$ be the \emph{complex Heisenberg} group
$$
G:=\left\{
\left(
\begin{array}{cccccc}
1  &z_1    &z_2  \\
0  &1      &z_3     \\
0  &0      &1\\
\end{array}
\right)\,:z_i\in\C\,,\operatorname{i}=1,2,3
\right\}
$$
and let $M$ be the compact manifold $M=G/\Gamma$, where $\Gamma$ is the co-compact lattice of $G$ formed by the matrices with integral entries.
Then $M$ is the \emph{Iwasawa manifold}.
It is well known that $M$ admits a global frame
$\mathcal{B}=\{X_1,X_2,X_3,X_4,X_5,X_6\}$ satisfying the following structure equations
$$
\begin{aligned}
&[X_1,X_2]=X_3\,,\quad [X_4,X_5]=-X_3
&[X_2,X_4]=X_6\,,\quad [X_5,X_1]=X_6\,.
\end{aligned}
$$
Let
$J_0$ be the almost complex structure defined on the basis $\mathcal{B}$ by
$$
\begin{aligned}
&J_0X_1=X_4\,,\quad &&J_0X_2=X_5\,,\quad &&J_0X_3=X_6\,,\\
&J_0X_4=-X_1\,,     &&J_0X_5=-X_2\,,     &&J_0X_6=-X_3\,,
\end{aligned}
$$
let $g_0$ be the $J_0$-almost Hermitian metric
$$
g_0=\sum_{i=1}^6\alpha_i\otimes\alpha_i\,,
$$
and let
$$
\omega_0:=\alpha_1\wedge\alpha_4+\alpha_2\wedge\alpha_5+\alpha_3\wedge\alpha_6\,,
$$
being $\{\alpha_1,\dots,\alpha_6\}$ the dual frame of $\mathcal{B}$. Then $(g_0,J_0,\omega_0)$ is a quasi K\"ahler structure on $M$.\\
The almost complex structure $J_0$ induces the $(1,0)$-frame
$$
Z_1=X_1-\operatorname{i}X_4\,,\quad Z_2=X_2-\operatorname{i}X_5\,,\quad Z_3=X_3-\operatorname{i}X_6\,.
$$
Clearly
$$
[Z_1,Z_2]=2\,Z_{\overline{3}}\,,\quad [Z_{\overline{1}},Z_{\overline{2}}]=2\,Z_{3}
$$
and all other brackets involving the vectors of the frame vanish.
Furthermore, a direct computation gives $\nabla_{\overline{i}}Z_j=0$, for $i,j=1,2,3$ and
\begin{equation*}
\begin{aligned}
&\nabla_{1}Z_1=0\,,               &&\nabla_{2}Z_1=-Z_{\overline{3}}\,, &&&&\nabla_{3}Z_1=Z_{\overline{2}}\,,\\
&\nabla_{1}Z_2=Z_{\overline{3}}\,,&&\nabla_{2}Z_2=0\,,                &&&&\nabla_{3}Z_2=Z_{\overline{1}}\,,\\
&\nabla_{1}Z_3=-Z_{\overline{2}}\,,&&\nabla_{2}Z_3=Z_{\overline{1}}\,,&&&&\nabla_{3}Z_3=0
\end{aligned}
\end{equation*}
where $\nabla$ is the Levi-Civita connection associated to $g_0$.
Hence $\nabla_{i}Z_j\in\Gamma(T^{0,1}M) $ and in view of Lemma \ref{preliminare}
the Hermitian curvature tensor of $(g_0,J_0)$ vanishes. Furthermore a straightforward application of our
formulae yields
that the curvature tensor associated to $g_0$ satisfies the second Gray identity.
\end{proof}
The Iwasawa manifold is (in some fashion) the unique example of a $6$-dimensional non-K\"ahler
almost complex nilmanifold admitting a quasi K\"ahler $\widetilde{R}$-flat metric. More precisely we have the following
\begin{theorem}
Let $(G,J)$ be a $6$-dimensional Lie group equipped with a left-invariant non-integrable
almost complex structure admitting a $J$-compatible quasi K\"ahler metric $g$ with vanishing Hermitian curvature tensor.
Then the Lie algebra of $G$ endowed with the almost complex structure induced by $J$
is isomorphic as complex Lie algebra to the one of the complex Heisenberg group
equipped with the almost complex structure induced by $J_0$.
\end{theorem}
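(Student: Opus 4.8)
The plan is to descend to the Lie algebra $\mathfrak{g}$ of $G$ and to force the complexified bracket into the shape of the complex Heisenberg algebra.

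First I would pass to the infinitesimal picture. Fix a left-invariant unitary $(1,0)$-frame $\{Z_1,Z_2,Z_3\}$ of $\mathfrak{g}^{1,0}$. Since $g,J,\omega$ are left-invariant, so are $\nabla$, $\wn$ and $\widetilde{R}$, and the hypothesis $\widetilde{R}=0$ together with Lemma \ref{preliminare} (and Remark \ref{remdeba}) yields a unitary $(1,0)$-coframe with $\partial\zeta_i=\ovp\zeta_i=0$, i.e. $\operatorname{d}\!\zeta_i\in\Gamma(\wedge^{0,2}M)$. In the homogeneous setting this says exactly that the canonical connection is the flat connection for which a left-invariant unitary frame is parallel (equivalently $\wn_XY=0$ for left-invariant $X,Y$), so that the structure equations of $\mathfrak{g}^{\mathbb{C}}$ read
\begin{equation*}
[Z_i,\overline{Z}_j]=0\,,\qquad [Z_i,Z_j]\in\mathfrak{g}^{0,1}\,,\qquad i,j=1,2,3\,,
\end{equation*}
together with their conjugates $[\overline{Z}_i,\overline{Z}_j]\in\mathfrak{g}^{1,0}$. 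Concretely, Lemma \ref{Z1Z2} gives $\wn_{\overline{Z}_i}Z_j=\nabla_{\overline{Z}_i}Z_j$ and $\wn_{Z_i}Z_j=(\nabla_{Z_i}Z_j)^{1,0}$; once these vanish one has $\nabla_{Z_i}Z_j\in\mathfrak{g}^{0,1}$ and $\nabla_{\overline{Z}_i}Z_j=0$, whence the two displayed relations follow from $[X,Y]=\nabla_XY-\nabla_YX$ and from $N(Z_i,Z_j)=-4[Z_i,Z_j]^{0,1}$.

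Next I would encode the whole bracket as a single linear map. Put $\mu:\wedge^2\mathfrak{g}^{1,0}\to\mathfrak{g}^{0,1}$, $\mu(u\wedge v):=[u,v]$, and set $V:=\operatorname{Im}\mu\subseteq\mathfrak{g}^{0,1}$; non-integrability means $N\neq0$, i.e. $\mu\neq0$, so $\dim V\geq1$. Because $[\mathfrak{g}^{1,0},\mathfrak{g}^{0,1}]=0$ and $[\mathfrak{g}^{1,0},\mathfrak{g}^{1,0}]\subseteq\mathfrak{g}^{0,1}$, all Jacobi identities hold automatically except those with two entries in $\mathfrak{g}^{1,0}$ and one in $\mathfrak{g}^{0,1}$ (and their conjugates); for $u,v\in\mathfrak{g}^{1,0}$, $\overline{w}\in\mathfrak{g}^{0,1}$ such an identity collapses to $[[u,v],\overline{w}]=0$, that is
\begin{equation*}
[V,\mathfrak{g}^{0,1}]=0\,,\qquad\text{while}\qquad [\mathfrak{g}^{0,1},\mathfrak{g}^{0,1}]=\overline{V}\,.
\end{equation*}
The heart of the argument is then a rank count for $\mu$, a map between two $3$-dimensional spaces. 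If $\dim V=3$ then $V=\mathfrak{g}^{0,1}$ and $[V,\mathfrak{g}^{0,1}]=[\mathfrak{g}^{0,1},\mathfrak{g}^{0,1}]=\overline{V}=\mathfrak{g}^{1,0}\neq0$, contradicting $[V,\mathfrak{g}^{0,1}]=0$. If $\dim V=2$, choose a $\mathbb{C}$-linear frame so that $V=\operatorname{span}(\overline{Z}_1,\overline{Z}_2)$; then $[V,\mathfrak{g}^{0,1}]=0$ kills every bracket of $\mathfrak{g}^{0,1}$, forcing $\overline{V}=[\mathfrak{g}^{0,1},\mathfrak{g}^{0,1}]=0$, again absurd. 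Hence $\dim V=1$. Choosing a frame with $V=\operatorname{span}(\overline{Z}_3)$, the relation $[\overline{Z}_3,\mathfrak{g}^{0,1}]=0$ and its conjugate give $[Z_3,Z_1]=[Z_3,Z_2]=0$, so the only non-zero bracket is $[Z_1,Z_2]=c\,\overline{Z}_3$ with $c\neq0$; rescaling $Z_3$ normalizes $c=2$, which is precisely the structure of the complex Heisenberg algebra with the almost complex structure induced by $J_0$. The required isomorphism is the $\mathbb{C}$-linear frame change just performed, and it commutes with $J$ by construction.

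The main obstacle is the very first step: converting $\widetilde{R}=0$ into the left-invariant structure equations $[Z_i,\overline{Z}_j]=0$, $[Z_i,Z_j]\in\mathfrak{g}^{0,1}$. The frames produced by $\widetilde{R}$-flatness are a priori only $\wn$-parallel, not left-invariant, so one must genuinely check that in the homogeneous case the canonical connection coincides with the flat connection annihilating a left-invariant unitary frame (equivalently $\wn_XY=0$ on $\mathfrak{g}$). Once this reduction is secured, the remainder is the elementary Jacobi-plus-rank computation above, which is dimension-specific and uses non-integrability only to discard the trivial rank-$0$ (K\"ahler) case.
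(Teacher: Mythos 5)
Your argument is correct and follows the paper's own route: Lemma \ref{preliminare} reduces the hypothesis to the structure equations $[Z_i,Z_{\ov{j}}]=0$, $[Z_i,Z_j]\in\mathfrak{g}^{0,1}$, and the Jacobi identity applied to two $(1,0)$-entries and one $(0,1)$-entry then pins down the bracket; your rank count on $\mu$ is just a clean repackaging of the paper's explicit steps (showing $A_{12}^{\ov{3}}\neq 0$ by contradiction, passing to the frame $W_1,W_2,W_3$, and applying Jacobi once more to kill $[W_1,W_3]$ and $[W_2,W_3]$). The ``main obstacle'' you flag at the end --- that an $\wn$-parallel frame produced by $\widetilde{R}=0$ is a priori not left-invariant, so the structure coefficients must still be shown to be constants of a Lie algebra --- is passed over in exactly the same way by the paper, which simply takes the frame of Lemma \ref{preliminare} to live on $\mathfrak{g}$; so your proposal matches the published proof both in substance and in the one point it leaves implicit.
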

\begin{proof}
Let $\mathfrak{g}$ be the Lie algebra of $G$. In view of Lemma \ref{preliminare} there exists a complex $(1,0)$-frame $\{Z_1,Z_2,Z_3\}$ on $\mathfrak{g}$
such that
$$
[Z_i,Z_j]=\sum_{k=1}^{3}A_{ij}^{\ov{k}}Z_{\ov{k}}\,,\quad [Z_i,Z_{\ov{j}}]=0\,,\quad i,j=1,2,3.
$$
Since $J$ is by hypothesis non-integrable, there exists at least a bracket different from zero. We may assume
$$
[Z_1,Z_2]\neq 0\,.
$$
Now we observe that $A_{12}^{\ov{3}}\neq 0$. Indeed, if by contradiction $A_{12}^{\ov{3}}=0$, then
$$
[Z_1,Z_2]=A_{12}^{\ov{1}}Z_{\ov{1}}+A_{12}^{\ov{2}}Z_{\ov{2}}
$$
and by the Jacobi identity
$$
\begin{aligned}
&0=[[Z_1,Z_2],Z_{\ov{1}}]=-A_{12}^{\ov{2}}[Z_{\ov{1}},Z_{\ov{2}}]\,,\\
&0=[[Z_1,Z_2],Z_{\ov{2}}]=-A_{12}^{\ov{1}}[Z_{\ov{1}},Z_{\ov{2}}]
\end{aligned}
$$
which implies $[Z_1,Z_2]=0$. Hence $A_{12}^{\ov{3}}$ has to be different from zero and, consequently,
$$
W_{1}:=Z_1\,,\quad W_{2}=Z_2\,\quad W_{3}:=\frac{1}{A_{\ov{1}\ov{2}}^{3}}(Z_{3}-A_{\ov{1}\ov{2}}^{1}Z_{1}-A_{\ov{1}\ov{2}}^{2}Z_{2})
$$
is a (1,0)-frame on $(\mathfrak{g},J)$. Such a frame satisfies
$$
[W_1,W_2]=W_{\ov{3}}\,.
$$
Finally, using again the Jacobi identity, we get
$$
\begin{aligned}
&0=[[W_1,W_{2}],W_{\ov{1}}]=-[W_{\ov{2}},W_{\ov{3}}]\,,\\
&0=[[W_1,W_{2}],W_{\ov{2}}]=-[W_{\ov{1}},W_{\ov{3}}]\,,
\end{aligned}
$$
i.e.
$$
[W_{2},W_{3}]=[W_1,W_{3}]=0
$$
which ends the proof.
\end{proof}
It is possible to find some non-equivalent quasi K\"ahler structures on the Iwasawa manifold having $\widetilde{R}=0$.
For instance we have the following example
\begin{ex}\label{nuovoesempio}\emph{It easy to show that the Iwasawa manifold $M$
admits a global coframe $\{\alpha_1,\dots,\alpha_6\}$  satisfying the following
structure equations
$$
\begin{aligned}
&\di \alpha_1=\di\alpha_3=-\alpha_1\wedge\alpha_2+ \alpha_4\wedge\alpha_5 -\alpha_2\wedge\alpha_3 +\alpha_5\wedge\alpha_6\,;\\
&\di\alpha_2=\di\alpha_5=0\,;\\
&\di\alpha_4=\di\alpha_6=-\alpha_2\wedge\alpha_4 + \alpha_1\wedge\alpha_5 -\alpha_3\wedge\alpha_5 +\alpha_2\wedge\alpha_6\,.
\end{aligned}
$$
Let $\{X_1,\dots,X_6\}$ be the frame dual to $\{\alpha_1,\dots,\alpha_6\}$ and consider the almost complex structure $J$
on $M$ defined on $\{X_1,\dots,X_6\}$ by
$$
\begin{aligned}
&JX_1=X_4\,,\quad &&JX_2=X_5\,,\quad &&JX_3=X_6\,,\\
&JX_4=-X_1\,,     &&JX_5=-X_2\,,     &&JX_6=-X_3\,.
\end{aligned}
$$
Let
$$
\omega:=\alpha_1\wedge \alpha_4+\alpha_2\wedge\alpha_5+\alpha_3\wedge \alpha_6\,;
$$
then a direct computation gives that $\omega$ is a $\ovp$-closed form compatible with $J$.
The basis $\{X_1,\dots X_6\}$ induces the complex $(1,0)$-frame
$$
Z_1=X_1-\operatorname{i}X_4\,,\quad Z_2=X_2-\operatorname{i}X_5\,,\quad Z_3=X_3-\operatorname{i}X_6\,.
$$
One easily gets
$$
[Z_1,Z_2]=2(Z_{\overline{1}}+Z_{\overline{3}})\,,\quad [Z_2,Z_3]=2(Z_{\overline{1}}+Z_{\overline{3}})\,,\quad [Z_1,Z_3]=0\,.
$$
Since $[Z_i,Z_{\overline{j}}]=0$ and $(g,J,\omega)$ is a quasi K\"ahler structure, in view of Lemma \ref{LVcor}
we have
$$
\nabla_{\overline{i}}Z_{j}=0\,,
$$
being $\nabla$ the Levi-Civita connection associated to the metric $g$. Furthermore a direct computation gives
\begin{equation*}
\begin{array}{llll}
&\nabla_{1}Z_1=-2Z_{\overline{2}}\,,&\nabla_{2}Z_1=-2Z_{\overline{3}}\,,  &\nabla_{3}Z_1=0\,,\\
&\nabla_{1}Z_2=2Z_{\overline{1}}\,, &\nabla_{2}Z_2=0\,,                   &\nabla_{3}Z_2=-2Z_{\ov{3}}\,,\\
&\nabla_{1}Z_3=2Z_{\ov{1}}\,,       &\nabla_{2}Z_{3}=2Z_{\overline{1}}\,, &\nabla_{3}Z_3=2Z_{\ov{2}}\,;\\
\end{array}
\end{equation*}
hence
$$
\nabla_{i}Z_j\in\Gamma(T^{0,1}M)\,,\quad \mbox{for every }i,j=1,2,3\,.
$$
By Lemma \eqref{preliminare}, we get that the Hermitian curvature tensor of $g$ vanishes. Also in this case a straightforward computation
gives that the curvature tensor of the metric $g$ satisfies the second Gray identity $({\rm G}_2)$.}
\end{ex}
\begin{rem}
\emph{
In the quasi K\"ahler case the condition $\widetilde{R}=0$ implies that the tensor $\mathcal{R}(g,J)$ described by \eqref{tosatti} vanishes.
Hence it is very natural to take into account the following problem:
\begin{itemize}
\item Does there exist a symplectic form $\omega'$ on the Iwasawa manifold taming the almost complex structure $J_0$ and such that
the pair $(\omega',J_0)$ induces an $\widetilde{R}$-flat quasi K\"ahler structure on $M$ ?
\end{itemize}
(This problem was suggested us by Valentino Tosatti).
The answer is unfortunately negative. In order to show this we fix a quasi K\"ahler $\widetilde{R}$-flat metric $g$ on the Iwasawa manifold
$M$ compatible with $J_0$. Then we can find a global unitary $(1,0)$-coframe $\{\zeta_1,\zeta_2,\zeta_3\}$ such that
\begin{equation}\label{forme}
\partial\zeta_i=\ovp\zeta_{i}=0\,,\quad  i=1,2,3\,.
\end{equation}
Assume that there exists a symplectic structure $\omega'$
taming $J_0$ and such that the pair $(\omega',J_0)$ induces the metric $g$. Then one necessary has
$$
\omega'=\omega+\beta+\ov{\beta}
$$
being $\omega$ the quasi K\"ahler form associated to $g$ and
$\beta$ a complex form of type $(3,0)$. Since ${\rm d}\omega'=0$ and ${\rm d}\omega$ is of type (3,0)+(0,3), then it has to be
$$
\ovp\beta=0\,,\quad {\rm d}(\beta+\ov{\beta})=f\,\zeta_1\wedge\zeta_2\wedge\zeta_3+g\,\zeta_{\ov{1}}\wedge\zeta_{\ov{2}}\wedge\zeta_{\ov{3}}\,,
$$
being $f,g$ smooth maps on $M$.
We can write $\beta=a\zeta_{12}+b\zeta_{23}+c\zeta_{13}$, where $a,b,c$ are smooth functions on $M$. Since $\ovp\beta=0$,
then equations \eqref{forme} imply that $a,b,c$ are $J_0$-holomorphic maps on $M$. Since $M$ is compact, $a,b,c$ have to be constant.
Finally using again \eqref{forme}, we obtain that such a $\beta$ cannot exist.
}
\end{rem}
As another application of Lemma \ref{preliminare} we have the following
\begin{prop}
Let $G$ be a Lie group equipped with a left-invariant quasi K\"ahler structure having the Hermitian
curvature tensor vanishing. Then $G$ is a $2$-step nilpotent Lie group.
\end{prop}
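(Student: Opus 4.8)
The plan is to reduce everything to the Lie algebra $\mathfrak{g}$ of $G$ and to extract $2$-step nilpotency directly from the Jacobi identity. First I would invoke Lemma \ref{preliminare}: since the quasi K\"ahler structure is left-invariant and $\widetilde{R}=0$, the Levi-Civita connection $\nabla$ and the adapted frame it produces can be taken left-invariant, so that $\mathfrak{g}$ carries a unitary $(1,0)$-frame $\{Z_1,\dots,Z_n\}$ with $\nabla_{i}Z_j\in\Gamma(T^{0,1})$ and $\nabla_{\overline{i}}Z_j=0$ for all $i,j$. From the torsion-freeness of $\nabla$ I would then read off the structure equations, exactly as in the proof of the preceding theorem: since $\nabla_{i}Z_{\overline{j}}=\overline{\nabla_{\overline{i}}Z_j}=0$ and $\nabla_{\overline{j}}Z_i=0$, one gets $[Z_i,Z_{\overline{j}}]=\nabla_iZ_{\overline{j}}-\nabla_{\overline{j}}Z_i=0$, while $[Z_i,Z_j]=\nabla_iZ_j-\nabla_jZ_i\in T^{0,1}$; writing $[Z_i,Z_j]=\sum_kA_{ij}^{\overline{k}}Z_{\overline{k}}$ with constants $A_{ij}^{\overline{k}}$, conjugation gives $[Z_{\overline{i}},Z_{\overline{j}}]\in T^{1,0}$.

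With these relations in hand, the core of the argument is to show that the derived subalgebra is central. The complexified derived algebra $[\mathfrak{g}^{\mathbb{C}},\mathfrak{g}^{\mathbb{C}}]$ is spanned by the vectors $[Z_i,Z_j]\in T^{0,1}$ together with their conjugates $[Z_{\overline{i}},Z_{\overline{j}}]\in T^{1,0}$, so it suffices to prove that each such generator commutes with every $Z_l$ and every $Z_{\overline{l}}$. For $\xi=[Z_i,Z_j]=\sum_kA_{ij}^{\overline{k}}Z_{\overline{k}}$ I would compute the two brackets separately: $[\xi,Z_l]=\sum_kA_{ij}^{\overline{k}}[Z_{\overline{k}},Z_l]=0$ because all mixed brackets vanish, and $[\xi,Z_{\overline{l}}]=[[Z_i,Z_j],Z_{\overline{l}}]=[Z_i,[Z_j,Z_{\overline{l}}]]-[Z_j,[Z_i,Z_{\overline{l}}]]=0$ by the Jacobi identity, again using $[Z_a,Z_{\overline{b}}]=0$. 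Taking conjugates shows that the generators $[Z_{\overline{i}},Z_{\overline{j}}]$ are central as well.

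Therefore $[\mathfrak{g}^{\mathbb{C}},[\mathfrak{g}^{\mathbb{C}},\mathfrak{g}^{\mathbb{C}}]]=0$, and passing back to the real form this reads $[\mathfrak{g},[\mathfrak{g},\mathfrak{g}]]=0$; that is, the derived ideal of $\mathfrak{g}$ is central, so $\mathfrak{g}$ is $2$-step nilpotent and hence so is $G$. The only genuinely delicate point, and the one I would be most careful about, is the very first reduction: Lemma \ref{preliminare} is a statement about a local frame, and I must justify that in the left-invariant setting the frame may be chosen left-invariant (equivalently, that the structure equations $[Z_i,Z_{\overline{j}}]=0$ and $[Z_i,Z_j]\in T^{0,1}$ hold in a basis of $\mathfrak{g}$). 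This is exactly the reduction already carried out in the previous theorem, so the remaining steps are then purely formal manipulations of the Jacobi identity.
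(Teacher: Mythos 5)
Your proposal is correct and follows essentially the same route as the paper: invoke Lemma \ref{preliminare} to obtain a frame with $[Z_i,Z_{\overline{j}}]=0$ and $[Z_i,Z_j]\in\Gamma(T^{0,1})$, then kill all double brackets via the Jacobi identity to conclude $2$-step nilpotency. Your explicit attention to the left-invariance of the frame is a point the paper passes over silently, but it does not change the argument.
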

\begin{proof}
Let $\{g,J,\omega\}$ be a quasi K\"ahler structure on $G$.
In view of Lemma \ref{preliminare}, there exists a global unitary $(1,0)$-frame $\{Z_1,\dots,Z_n\}$ on $G$ such
that
$$
\nabla_{i}Z_{\ov{j}}=0\,,\quad \nabla_{i}Z_{j}\in\Gamma(T^{0,1}G)\,,\quad 1\leq i,j\leq n\,,
$$
where $\nabla$ is the Levi-civita connection associated to $g$.
In particular we have
$$
[Z_i,Z_{\ov{j}}]=0\,,\quad  [Z_i,Z_j]\in\Gamma(T^{0,1}G)\,,\quad 1\leq i,j\leq n
$$
and, consequently,
$$
[[Z_i,Z_{\ov{j}}],Z_k]=[[Z_i,Z_j],Z_k]=0\,,\quad \quad 1\leq i,j,k\leq n\,.
$$
The Jacobi identity in terms of $\{Z_1,\dots,Z_n\}$ reads as
$$
[[Z_i,Z_j],Z_{\ov{k}}]=0\,,\quad 1\leq i,j,k\leq n\,.
$$
Hence we get
$$
[[Z_i,Z_{\ov{j}}],Z_k]=[[Z_i,Z_j],Z_k]=[[Z_i,Z_j],Z_{\ov{k}}]=0\,,\quad \quad 1\leq i,j,k\leq n
$$
and that the Lie algebra of $G$ is $2$-step nilpotent.
\end{proof}
In view of Remark \ref{remdeba}, require that a quasi K\"ahler metric $g$
locally admits a complex unitary $(1,0)$-frame $\{\zeta_1,\dots,\zeta_n\}$ satisfying
$$
\partial\zeta_i=\ovp\zeta_i=0\,,\quad i=1,\dots,n
$$
is a bit less that require that the Hermitian curvature tensor of $g$ vanishes. Hence it is rather natural to wonder if an almost K\"ahler structure
can admit such a coframe. The answer is negative, since we have the following Proposition
\begin{prop}
Let $(M,g,J,\omega)$ be an almost K\"ahler manifold. Assume that $M$ admits a global unitary $(1,0)$-coframe $\{\zeta_1,\dots,\zeta_n\}$ satisfying
$$
\partial\zeta_i=\ovp\zeta_i=0\,,\quad i=1,\dots,n\,.
$$
Then $M$ is K\"ahler.
\end{prop}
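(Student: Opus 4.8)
The plan is to show that the prescribed coframe forces the Hermitian curvature to vanish, $\widetilde{R}=0$, and then to finish with Corollary \ref{4Apost}: since the zero tensor trivially satisfies the first Bianchi identity \eqref{bianchi} and $(M,g,J,\omega)$ is almost K\"ahler, that corollary gives at once that $M$ is K\"ahler. By Lemma \ref{preliminare}, establishing $\widetilde{R}=0$ reduces to checking that the unitary $(1,0)$-frame $\{Z_1,\dots,Z_n\}$ dual to $\{\zeta_1,\dots,\zeta_n\}$ satisfies $\nabla_{Z_i}Z_j\in\Gamma(T^{0,1}M)$ and $\nabla_{\overline{Z}_i}Z_j=0$ for all $i,j$, where $\nabla$ is the Levi--Civita connection of $g$.

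First I would translate the hypotheses into bracket conditions. Since $\zeta_l(Z_m)=\delta_{lm}$ is constant and $\zeta_l(\overline{Z}_m)=0$, evaluating $\ovp\zeta_l=(\di\zeta_l)^{1,1}$ on a mixed pair gives $\ovp\zeta_l(Z_i,\overline{Z}_j)=-\zeta_l([Z_i,\overline{Z}_j])$. Hence $\ovp\zeta_l=0$ for all $l$ is equivalent to $[Z_i,\overline{Z}_j]\in\Gamma(T^{0,1}M)$ for all $i,j$.

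The central step is to pass from this bracket condition to the two connection conditions. Because $(M,g,J,\omega)$ is almost K\"ahler it is in particular quasi K\"ahler, so Lemma \ref{LVcor}(a) gives $\nabla_{\overline{Z}_i}Z_j\in\Gamma(T^{1,0}M)$. Using that the frame is unitary, that $g$ is the $\C$-bilinear extension of a real metric, and that $\nabla$ commutes with conjugation, I would compute $g(\nabla_{Z_i}\overline{Z}_j,\overline{Z}_m)=\overline{g(\nabla_{\overline{Z}_i}Z_j,Z_m)}=0$, the last equality because both arguments are of type $(1,0)$; thus $(\nabla_{Z_i}\overline{Z}_j)^{1,0}=0$. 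Taking the $(1,0)$-part of $[Z_i,\overline{Z}_j]=\nabla_{Z_i}\overline{Z}_j-\nabla_{\overline{Z}_j}Z_i$, using the bracket condition together with $\nabla_{\overline{Z}_j}Z_i\in\Gamma(T^{1,0}M)$, I would obtain $\nabla_{\overline{Z}_j}Z_i=0$. Conjugating gives $\nabla_{Z_i}\overline{Z}_j=0$, and then metric compatibility with the unitary condition, $0=Z_ig(Z_j,\overline{Z}_m)=g(\nabla_{Z_i}Z_j,\overline{Z}_m)+g(Z_j,\nabla_{Z_i}\overline{Z}_m)$, forces $g(\nabla_{Z_i}Z_j,\overline{Z}_m)=0$, i.e.\ $\nabla_{Z_i}Z_j\in\Gamma(T^{0,1}M)$.

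This verifies both conditions of Lemma \ref{preliminare}, whence $\widetilde{R}=0$ and the argument closes as above. I expect the only delicate point to be this central step: carefully tracking the types and exploiting the reality of $\nabla$ to turn $\ovp\zeta_l=0$ into $\nabla_{\overline{Z}_j}Z_i=0$. It is worth remarking that the hypothesis $\partial\zeta_l=0$ is not actually used here; the almost K\"ahler assumption enters only through the final application of Corollary \ref{4Apost}, and this is essential, since a strictly quasi K\"ahler structure (such as the one exhibited on the Iwasawa manifold above) admits such a coframe yet fails to be K\"ahler.
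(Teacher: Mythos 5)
Your proof is correct, but it follows a genuinely different route from the paper's. The paper argues directly on the Nijenhuis tensor: from $\partial\zeta_i=\ovp\zeta_i=0$ it writes $[Z_i,Z_j]=\sum_k A_{ij}^{\ov{k}}Z_{\ov{k}}$ and $[Z_i,Z_{\ov{j}}]=0$, uses the cyclic identity $\mathfrak{S}\,g(N(X,Y),Z)=0$ valid for almost K\"ahler structures to get $A_{ij}^{\ov{k}}+A_{ki}^{\ov{j}}+A_{jk}^{\ov{i}}=0$, and combines this with the Jacobi identity $[[Z_i,Z_j],Z_{\ov{r}}]=0$ to force $\sum_k|A_{ij}^{\ov{k}}|^2=0$, i.e.\ $N=0$. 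You instead promote the coframe hypothesis to the connection-level conditions of Lemma~\ref{preliminare}, conclude $\widetilde{R}=0$, and then invoke Corollary~\ref{4Apost}. Your central computation checks out: Lemma~\ref{LVcor}(a) and its conjugate give $\nabla_{\ov{Z}_j}Z_i\in\Gamma(T^{1,0}M)$ and $(\nabla_{Z_i}\ov{Z}_j)^{1,0}=0$, so the $(1,0)$-part of the torsion-free identity $[Z_i,\ov{Z}_j]=\nabla_{Z_i}\ov{Z}_j-\nabla_{\ov{Z}_j}Z_i$ is exactly $-\nabla_{\ov{Z}_j}Z_i$, which vanishes because $\ovp\zeta_l=0$ kills the $(1,0)$-component of $[Z_i,\ov{Z}_j]$; unitarity and metric compatibility then give $\nabla_{Z_i}Z_j\in\Gamma(T^{0,1}M)$. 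Your route buys two things: it exposes that only $\ovp\zeta_i=0$ is needed (the paper's argument genuinely uses $\partial\zeta_i=0$, since its Jacobi-identity step needs the full bracket $[Z_i,Z_j]$ to lie in $\Gamma(T^{0,1}M)$), and it shows that for a quasi K\"ahler structure the existence of such a unitary coframe is in fact \emph{equivalent} to $\widetilde{R}=0$, which sharpens the ``a bit less'' comment the paper makes just before the proposition. What the paper's route buys is self-containedness: it relies only on elementary identities for $N$ rather than on Corollary~\ref{4Apost} and the curvature machinery behind it.
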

\begin{proof}
Assume that such a coframe exists and let $\{Z_1,\dots,Z_n\}$ be the dual frame. Then we have
$$
[Z_i,Z_{\ov{j}}]=0\,,\quad [Z_i,Z_j]\in\Gamma(T^{0,1}M)\,,\quad i,j=1,\dots, n\,.
$$
In particular, we can write
$$
[Z_i,Z_j]=\sum_{k=1}^n A_{ij}^{\ov{k}}\,Z_{\ov{k}}
$$
and the Nijenhuis tensor of $J$ satisfies
$$
N(Z_i,Z_j)=-4\sum_{k=1}^n A_{ij}^{\ov{k}}Z_{\ov{k}}\,.
$$
Now we recall that the Nijenhuis tensor of an almost K\"ahler manifold always satisfies
$$
\underset{X,Y,Z}{\mathfrak{S}}\,g(N(X,Y),Z)=0\,.
$$
This formula in our case reads as
\begin{equation}\label{A}
A_{ij}^{\ov{k}}+A_{ki}^{\ov{j}}+A_{jk}^{\ov{i}}=0\,,\quad 1\leq i,j,k\leq n\,.
\end{equation}
Since the brackets of the form $[Z_i,Z_{\ov{j}}]$ vanish, then the Jacobi identity in terms of $Z_i$'s reads as
$$
[[Z_i,Z_j],Z_{\ov{r}}]=0\,,\quad 1\leq i,j,r\leq n\,,
$$
i.e.
$$
0=[[Z_i,Z_j],Z_{\ov{r}}]=
\sum_{k=1}^n[A_{ij}^kZ_{\ov{k}},Z_{\ov{r}}]=-\sum_{k=1}^n Z_{\ov{r}}(A_{ij}^{\ov{k}})\,Z_{\ov{k}}+\sum_{k,s=1}^n A_{ij}^{\ov{k}}\ov{A}_{kr}^{\ov{s}}Z_s\,.
$$
In particular one has
\begin{equation}\label{B}
\sum_{k=1}^n A_{ij}^{\ov{k}}\ov{A}_{kr}^{\ov{s}}=0\,,\quad 1\leq i,j,s,r\leq n\,.
\end{equation}
Using equations \eqref{A} and \eqref{B}, we get
$$
0=\sum_{k=1}^n A_{ij}^{\ov{k}}\ov{A}_{ki}^{\ov{j}}=-\sum_{k=1}^n \{A_{ij}^{\ov{k}}
\ov{A}_{ij}^{\ov{k}}-A_{ij}^{\ov{k}}\ov{A}_{kj}^{\ov{i}}\}=
-\sum_{k=1}^n |A_{ij}^{\ov{k}}|^2
$$
which forces $(M,g,J,\omega)$ to be a K\"ahler manifold.
\end{proof}

\end{document}